\newcommand{\argmax}{\mathop{\mathrm{arg\:max}}}
\newcommand{\argmin}{\mathop{\mathrm{arg\:min}}}
\newcommand{\coloneqq}{\mathrel{\vcenter{\hbox{$:$}}{=}}}
\newtheorem{thm}{Theorem}[section]
\newtheorem{prop}{Proposition}[section]
\newtheorem{lemma}{Lemma}[section]
\newtheorem{defn}{Definition}[section]
\newenvironment{proof}[1][Proof]{\paragraph{#1:}}{\hfill$\square$}
\begin{document}

\title[Predictive risk estimation for the EM algorithm with Poisson data]{Predictive risk estimation for the Expectation Maximization algorithm with Poisson data}

\author{Paolo Massa and Federico Benvenuto}
\address{Dipartimento di Matematica, Universit\`a di Genova, via Dodecaneso 35, 16146
Genova, Italy}
\ead{\mailto{massa.p@dima.unige.it}, \mailto{benvenuto@dima.unige.it}}

\vspace{10pt}
\begin{indented}
\item[]September 2020
\end{indented}

\begin{abstract}
In this work, we introduce a novel estimator of the predictive risk with Poisson data, when the loss function is the Kullback-Leibler divergence, in order to define a regularization parameter's choice rule for the Expectation Maximization (EM) algorithm.
To this aim, we prove a Poisson counterpart of the Stein's Lemma for Gaussian variables, and from this result we derive the proposed estimator showing its analogies with the  well-known  Stein's  Unbiased  Risk  Estimator valid for a quadratic loss.
We prove that the proposed estimator is asymptotically unbiased with increasing number of measured counts, under certain mild conditions on the regularization method.
We show that these conditions are satisfied by the EM algorithm and then we apply this estimator to select its optimal reconstruction.
We present some numerical tests in the case of image deconvolution, comparing the performances of the proposed estimator with other methods available in the literature, both in the inverse crime and non-inverse crime setting.
\end{abstract}

\vspace{2pc}
\noindent{\it Keywords}: Predictive risk, SURE-type estimator, Expectation Maximization, Poisson data, Kullback-Leibler divergence, image deconvolution

\section{Introduction}
The Expectation Maximization (EM) algorithm is a general iterative strategy proposed in \cite{dempster1977maximum} to compute the Maximum Likelihood (ML) solution of a given estimation problem.
EM has been widely applied to Poisson inverse problems, i.e. when measurements are corrupted by Poisson noise: examples arise in medical imaging, astronomical imaging, microscopy and whenever data are collected by means of a counting process \cite{bertero2018inverse,hohage2016inverse,shepp1982maximum, Snyder:93}. 
In the special case of deconvolution, it was known as Richardson-Lucy algorithm \cite{lucy1974iterative,richardson1972bayesian} before the formalization given in \cite{dempster1977maximum}.
It is well known that the EM algorithm for Poisson data converges to the ML solution which is an unreliable solution of the problem as it is corrupted by noise amplification due to the ill-posedness of the problem.
Nonetheless, at the beginning of the EM iterative process the approximated solutions improve, so that finding the optimal number of iterates, i.e. when the algorithm starts diverging from the exact solution of the inverse problem, is a long standing problem which is of great interest for a number of studies.
Such a property is widely known as semiconvergence, or sometimes authors refer to it as self-regularization \cite{munk2010self}.
In \cite{Resmerita_2007} authors have theoretically proven the semiconvergent behaviour of the EM algorithm for noisy data, under some mild assumptions.

Among the many different strategies proposed to stop the EM algorithm for Poisson data, (see e.g. \cite{benvenuto2014regularization,Bertero_2010, bissantz2008statistical,hohage2016inverse,llacer1991use, munk2010self} and references therein), in this work we focus on rules based on predictive risk minimization, that has recently gained some attention in the Gaussian framework \cite{benvenuto2019discrepancy,benvenuto2020parameter, deledalle2014stein,li2020empirical,lucka2018risk}.
The predictive risk is the expectation of the loss function between the the data and the prediction and, in the case of Poisson data, the loss function is the Kullback-Leibler (KL) divergence.
To the best of our knowledge, only two strategies have been proposed in the literature to estimate the predictive risk in the case of Poisson inverse problems, 
each of which relying on a different estimation of the risk function. 
The first approach was proposed in \cite{santos2009new} and it introduces an approximation of the predictive risk based on the Taylor expansion of the KL divergence, without proving its convergence.
The second approach has been more recently proposed in \cite{deledalle2017} and it exploits a statistical estimator valid for the entire class of the generalised linear models (see equations (2.3) and (2.10) in \cite{hudson1978natural}), obtaining an estimator of the predictive risk in the case of Poisson data with an unknown but constant bias, named PUKLA for Poisson Unbiased KL Analysis.
The striking fact is that, although the connection between inverse problems and statistical estimation has been well established (see e.g. \cite{Evans_2002}), it took about forty years before this result was applied to an inverse problem.
Nonetheless, this estimator cannot be  straightforwardly applied to large sized data due to its computational complexity.
In order to overcome this problem, in \cite{deledalle2017} the author proposed an approximation of PUKLA which relies on a combination of a Taylor expansion and a Monte-Carlo integration (see equation (6.2) in  \cite{deledalle2017}).
Also in this case, convergence and statistical properties of this approximation have not been investigated.

In this work, we introduce a novel estimator of the predictive risk with Poisson data, proving that it is asymptotically unbiased when used to choose the optimal number of iterations of the EM algorithm.
In order to do this, we state and prove a more general result, a Poisson counterpart of the Stein's Lemma for Gaussian variables
\cite{stein1981estimation}.
Analogously to the well-known Stein's Unbiased Risk Estimator (SURE), the proposed predictive risk estimator is composed by three parts: the first and the second part of the estimator represent the bias and the variance, respectively, while the third term is the negative asymptotic variance of the estimator.
In line with the Gaussian case, the bias term is given by the empirical loss function, whereas the variance term involves the derivative of the influence (or smoothing) operator and the third term is constant.
The actual difference between the two decomposition is that in the Poisson case a fourth term exists and, under some regularity assumptions, it tends to zero with increasing number of counts, making this estimator asymptotically unbiased.
Operationally, the idea is then to select the iterate of the EM algorithm when this function takes the minimum, i.e. in terms of an estimation of the bias-variance trade-off.
However, we show with some numerical tests in the case of image deconvolution that the performances of all these predictive risk estimators are quite similar: the main advantage of the one proposed in this work is its complete theoretical justification. Indeed, this analysis shed some light also about the above mentioned approximations of the KL based risk (see \cite{deledalle2017, santos2009new}).
The comparison with the Gaussian case becomes straightforward and it is a reference point to identify the role of the terms of the existing estimates.
Moreover, it becomes clear that the statistical convergence of these estimators is not guaranteed as it depends on some regularity conditions of the regularization method.
Finally, such conditions concern the mathematical properties of the regularization method and they can be verified in advance.

The paper is organised as follows.
In section 2 we describe the mathematical formulation of an inverse problem with Poisson data and the EM algorithm viewed as a one-parameter family of estimators.
In section 3 we introduce the risk definition and we give the main results of the paper: the asymptotic Stein's Lemma for Poisson variables and the Poisson Asymptotically Unbiased KL risk estimator.
Then, in section 4 we prove that the EM algorithm satisfies the conditions under which the proposed predictive risk estimator computed along the EM iterates is asymptotically unbiased.
In section 5 we show some numerical tests in the case of image deconvolution, comparing the use of predictive risk estimators and the Poisson discrepancy principle \cite{Bertero_2010}.
Section 6 is devoted to conclusions.
Proofs of the main results are contained in the Appendix.

\section{Mathematical formulation}

The aim of an ill-posed inverse problem in a semi-discrete setting \cite{bertero1985linear,Resmerita_2007} is to estimate a function $u^\ast\in L^2 (\mathbb{R}^d)$, representing an emission density, having random  noisy measurements $Y_1, \dots Y_M$ of the value of $M$ linear functionals that can be thought as the components of $A u^\ast$, where $A \colon L^2(\mathbb{R}^d) \to \mathbb{R}^M$ is defined by 
\begin{equation}
\label{model equation}
(Au)_i = \int_{\Omega} u(s)h_i(s) \,ds
\quad , \quad
\forall ~ u\in L^2 (\mathbb{R}^d) \qquad
\forall ~ i\in\{1, \dots, M\}
\end{equation}
with $\Omega \subseteq \mathbb{R}^d$ a compact set and $h_i\in L^\infty(\Omega)$. Typically, the dimension $d$ is equal to $2$ or $3$.
In the case of Poisson measurements, the additional hypothesis is that
\begin{equation}
    \label{poisson_noise}
Y = (Y_1, \dots, Y_M) \sim \mathcal P( Au^\ast + b )
\end{equation}
where $\mathcal P(\lambda)$ is the Poisson distribution of parameter $\lambda$ with independent components and $b=(b_1, \dots, b_M)$ is a vector whose components $b_i>0$ are the mean values of the detected background emission. In applications, such as microscopy or astronomy, equation \eref{model equation} represents the forward linear operator describing the signal formation by means of the instrument acquisition system: in this case $\Omega$ is the field of view of the imaging system and $h_i$ is the transmission function describing the 
probability that a photon, coming out from the point $s$, reaches the $i$-th detector.
Moreover, the number of counts registered by the $M$ detectors is a realization of $Y_1, \dots, Y_M$. Finally, the function $u^\ast \geq 0$ satisfies the non-negativity hypothesis as it represents the intensity of a photon emission.

\ifdefined\APPLICAZIONI
In many applications, such as astronomical imaging or Positron Emission Tomography (PET), an unknown object is reconstructed from a limited set of photon count measurements. In a semi-discrete setting \cite{bertero1985linear}, the aim is to determine a function $x^\ast\in L^2 (\mathbb{R}^2)$ representing the emission density (therefore $x^\ast \geq 0$) given the number of counts registered by $M$ detectors. The forward linear operator that mimics the instrument acquisition is a map $H \colon L^2(\mathbb{R}^2) \to \mathbb{R}^M$ defined by 
\begin{equation}\label{model equation}
(Hx)_i = \int_{\Omega} x(s)h_i(s) \,ds
\end{equation}
for all $i\in\{1, \dots, M\}$ and for all $x\in L^2(\mathbb{R}^2)$, where $\Omega \subseteq \mathbb{R}^2$ is a compact set representing the field of view of the imaging system and $h_i\in L^\infty(\Omega)$ are the transmission functions describing what is the 
probability that a photon,  coming out from the point $s$, reaches the $i$-th detector.  
\fi

In practical applications a discretisation of \eref{model equation} is required; this means that the domain $\Omega$ is partitioned into $N$ cells with the same area (or volume) $\Delta s$. \Eref{model equation} can be rewritten as
\begin{equation}
(Au)_i = \sum_{j=1}^N x_j h_{i,j} \Delta s ~, 
\end{equation}
where $x_j$ and $h_{i,j}$ are the mean values in the $j$-th cell of the functions $u$ and $h_i$ respectively. 
The unknown function $u^\ast$ becomes then a vector in $\mathbb{R}^N$ and the forward linear operator a matrix which we denote with $x^\ast$ and  $H\in\mathbb{R}^{M\times N}$ respectively; in particular the entries of $H$ are given by
\begin{equation}
H_{ij} = h_{i,j} \Delta s ~.
\end{equation}
As measurements are counting processes drawn from a Poisson random  variable $Y$ with independent components and mean value 
$\lambda = H x^\ast +  b \in\mathbb{R}^M_{++}$, we can assume that the vector of data recorded is a realization $y\in\mathbb{R}^M_+$.
Then, the problem of determining the object from the measurements is the discrete inverse problem given by the equation
\begin{equation}\label{Eq inverse problem}
    y = Hx +  b ~,
\end{equation}
where $x = (x_1, \dots, x_M)$. We assume $H_{ij}>0$ for all $i$, $j$ as in \cite{Resmerita_2007}; we point out that this assumption, very restrictive from a theoretical view point, is not a limitation in practice as we can always numerically approximate a matrix with non-negative components with another one with arbitrarily small positive entries.
For example, denoising applications, for which $H=I$ and $b=0$, can be considered as a limiting case of this hypothesis.

The probability of observing a data $y$ corresponding to an object $x$ is given by
\begin{equation}
p(y ; x) \coloneqq \prod_{i=1}^M \frac{e^{-(Hx + b)_i   } (Hx + b)_i^{y_i}}{y_i!} ~,
\end{equation}
therefore, determining the unknown corresponds to estimating the parameter of a Poisson variable. 
The ML approach consists in finding the value of the parameter $x$ for which the log-likelihood function $\log L_y(x) \coloneqq \log(p(y ; x))$ takes its maximum, i.e. in solving
\begin{equation}\label{max log likelihood}
\argmax_{x\in\mathbb{R}^M_+} \left\{ \log L_y(x) = \sum_{i=1}^M -(Hx + b)_i + y_i \log(Hx + b)_i - \log(y_i !) \right\} ~.
\end{equation}

It is worth noticing that, by changing the sign of the objective function and by subtracting terms that are constant with respect to $x$, it can be readily shown that problem \eref{max log likelihood} is equivalent to
\begin{equation}\label{problem min Dkl}
\argmin_{x\in\mathbb{R}^M_+} D_{\mathrm{KL}}(y, Hx +  b) ~,
\end{equation}
where $D_{\mathrm{KL}}$ is the Kullback-Leibler divergence \cite{bertero2008iterative} defined by
\begin{equation}
D_{\mathrm{KL}}(u, v) \coloneqq \sum_{i=1}^M u_i \log \frac{u_i}{v_i} + v_i - u_i
\end{equation}
for all $u\in\mathbb{R}^M_+$ and for all $v\in\mathbb{R}^M_{++}$.
When data are Poisson distributed, the EM algorithm takes the form
\begin{equation}
x_{k+1} = \psi_y(x_k)
\end{equation}
where
\begin{equation}
\psi_y(x) \coloneqq \frac{x}{H^T 1} H^T \left(\frac{y}{Hx +  b} \right) ~.
\end{equation}
where $1$ represents a vector with entries all equal to one and multiplication and division between vectors are meant component-wise. The usual initialisation is $x_0 = 1$.
This algorithm was presented in \cite{Snyder:93} as a application of the general EM strategy proposed in \cite{dempster1977maximum} to the case of image deconvolution and it is a slight modification of the method introduced by Richardson \cite{richardson1972bayesian} and Lucy \cite{lucy1974iterative} for taking into account the background emission $b$.
Moreover, examples when the operator $H$ is space variant arise both in astronomy \cite{benvenuto2013expectation,massa2019count} and biomedicine \cite{politte1991corrections,shepp1982maximum}.
Following \cite{benvenuto2017study, benvenuto2016robust}, the $k$-th iterate of the EM algorithm can be written in closed form as a function of the data $y$. In particular, the function $R_k \colon \mathbb{R}_+^M \to \mathbb{R}_+^N$ such that $R_k(y) = x_k$, is
\begin{equation}
\label{reg_alg}
R_k(y) \coloneqq (\underbrace{\psi_{y} \circ \ldots \circ \psi_{y}}_{k~{\rm times}}) (x_0) 
\end{equation}
with $R_0(y) \coloneqq x_0$ for all $y\in\mathbb{R}^M_+$.

Due to the ill-posedness of the inverse problem \eref{model equation}, the discrete forward operator $H$ is ill-conditioned and the ML solution is deprived of physical meaning \cite{bertero1998introduction}. 
Moreover, the semiconvergent behaviour of the EM algorithm is a well known property \cite{bertero1998introduction,Resmerita_2007}; in particular for $b=0$, it has been recently proved in \cite{Pouchol_2020} that the solution of \eref{problem min Dkl} (possibly reached for $k=+\infty$) is composed by a few non-zero components that gives rise to spikes in the reconstructed image.
Therefore, the EM algorithm requires regularization in the form of a stopping rule that prevents over-fitting of noisy data. 

\section{Asymptotic Stein's Lemma for Poisson KL risk estimator}

To regularize the EM algorithm by early stopping the iterations, several approaches have been investigated in the literature; in particular there exists a class of methods based on the predictive risk estimate \cite{bardsley2009regularization, deledalle2017, santos2009new}.
In this paper we follow this strategy, motivated by the considerations below. In the inverse problem \eref{Eq inverse problem}, the data $y$ is a realization of the Poisson random variable $Y$ and the $k$-th iterate of the EM algorithm gives rise to an estimator of its mean value $\lambda$, defined by
\begin{equation}
\hat{\lambda}_k(Y) \coloneqq HR_k(Y)+b ~.
\end{equation}
The semiconvergent behaviour of the EM algorithm is reflected in the fact that $D_{\mathrm{KL}}(y, \hat{\lambda}_k(y)) \to 0$ as $k\to +\infty$; therefore, as the number of iterations increases, the estimate $\hat{\lambda}_k(y)$ of $\lambda$ gets more and more biased towards $y$. Actually, we would like to stop the algorithm at the iterate $k^\ast$ such that 
\begin{equation}
k^\ast = \argmin_{k\in\mathbb{N}}D_{\mathrm{KL}}(\lambda, \hat{\lambda}_k(y)) ~,
\end{equation}
but the computation of $D_{\mathrm{KL}}(\lambda, \hat{\lambda}_k(y))$ is unfeasible because it depends on the value of $\lambda$ which is unknown. The idea is to replace this quantity with its expected value with respect to the random variable $Y$ and therefore to find the iterate $k^\ast$ that satisfies
\begin{equation}\label{min pred risk}
k^\ast = \argmin_{k\in\mathbb{N}} \mathbb{E}(D_{\mathrm{KL}}(\lambda, \hat{\lambda}_k(Y)) ~.
\end{equation}
We observe that $\mathbb{E}(D_{\mathrm{KL}}(\lambda, \hat{\lambda}_k(Y))$ is the risk of the estimator $\hat{\lambda}_k(Y)$ when the KL divergence is the loss function; from the point of view of the estimator $R_k(Y)$ of the reconstructed image it is called \textit{predictive risk}. For the sake of clarity, we recall the definition of risk in the statistical framework. It is worth noticing that the definition we adopted is obtained by considering an estimator as decision function \cite{berger2013statistical, Evans_2002}.

\begin{defn}[Risk of an estimator]
Let $\theta\in\Theta$  be a parameter of a random variable $Y$ that takes values in a set $\mathcal{Y}$ and let  $\hat{\theta}\colon \mathcal{Y} \to \mathcal{A}$ be a function such that $\hat{\theta}(Y)$ is an estimator of $\theta$. The risk of the estimator $\hat{\theta}(Y)$, computed with respect to the loss function  $L \colon \Theta \times \mathcal{A} \to [0, +\infty]$, is the quantity
\begin{equation}
r_\theta(\hat{\theta}(Y))\coloneqq \mathbb{E}(L(\theta, \hat{\theta}(Y))) ~.
\end{equation}
\end{defn}
This definition applies to our case by taking $L$ the KL divergence, $Y$ a Poisson random variable and $\theta$ its mean value $\lambda$. Moreover, the role of $\hat{\theta}$ is played at each iteration of the EM algorithm by $\hat{\lambda}_k$. Once again one point out that, in order to compute the value of $\mathbb{E}(D_{\mathrm{KL}}(\lambda, \hat{\lambda}_k(Y))$, it is necessary to know $\lambda$; actually, as we will show in the following, it is possible to obtain estimators of $\mathbb{E}(D_{\mathrm{KL}}(\lambda, \hat{\lambda}_k(Y))$ that do not depend on $\lambda$, therefore we will determine the iterate $k^\ast$ by replacing the risk in \eref{min pred risk} with an estimator of its.

Unbiased risk estimation has been widely studied in the past decades; in particular, the most famous case is the one concerning normal random variables and the square $\ell^2$ norm as loss. In \cite{stein1981estimation} Stein proved that, given $Y \sim \mathcal{N}(\mu, \sigma^2 I)$ an $M$-dimensional normal random variable and $\hat{\mu}(Y)$ an estimator of $\mu$, under suitable conditions on $\hat{\mu}$ it holds true that the risk $\mathbb{E}\left(\Vert\hat{\mu}(Y) - \mu\Vert^2 \right)$ can be unbiasedly estimated by the Stein's Unbiased Risk Estimator (SURE)
\begin{equation}\label{SURE for gauss}
\mathrm{SURE}(Y) \coloneqq \Vert\hat{\mu}(Y) - Y \Vert^2 + 2\sigma^2 \nabla\cdot \hat{\mu}\left(Y\right) -M\sigma^2 ~.
\end{equation}
The derivation of \eref{SURE for gauss} is essentially based on a result, named Stein's Lemma, which states that, for all $i\in\{1, \dots, M\}$
\begin{equation}\label{Stein's lemma}
\mathbb{E}\left((Y_i - \mu_i)f(Y) \right) = \mathbb{E}\left( \sigma^2 \partial_i f(Y) \right) ~,
\end{equation}
if $f\colon \mathbb{R}^M \to \mathbb{R}$ satisfies some mild conditions. In the case of an $M$-dimensional Poisson random variable  $Y\sim \mathcal{P}(\lambda)$ with independent entries an analogous formulation of \eref{SURE for gauss} was given \cite{deledalle2017, hudson1978natural}. If $\hat{\lambda}(Y)$ is an estimator of $\lambda$, then $\mathbb{E}(\Vert \hat{\lambda}(Y) - \lambda \Vert^2 )$ can be estimated by the Poisson Unbiased Risk Estimator (PURE)
\begin{equation}
\mathrm{PURE}(Y) \coloneqq \Vert \hat{\lambda}(Y)\Vert^2 -2 Y\cdot \hat{\lambda}_\downarrow(Y) + Y \cdot (Y-1) ~,
\end{equation}
where $\hat{\lambda}_\downarrow(Y)_i \coloneqq \hat{\lambda}(Y - e_i)_i$ for all $i\in\{1, \dots, M\}$ and $e_i$ is the $i$-th element of the canonical basis. As explained in \cite{deledalle2017}, considering the square $\ell^2$ norm as loss for the risk of $\hat{\lambda}$ is not very effective in practice because it does not take into account the heteroscedasticity of the Poisson random variable.
Therefore the KL divergence was considered as loss function and it was proved that the risk $\mathbb{E}(D_{\mathrm{KL}}(\lambda, \hat{\lambda}(Y)))$ can be unbiasedly estimated by the Poisson Unbiased KL Analysis (PUKLA) estimator
\begin{equation}\label{PUKLA}
\mathrm{PUKLA}(Y) \coloneqq \Vert \hat{\lambda}(Y) \Vert_1 - Y \cdot \log\hat{\lambda}_\downarrow(Y) ~,
\end{equation}
where the logarithm is applied component-wise. The computation of PUKLA is very expensive in practice as it requires $M$ times the evaluation of $\hat{\lambda}$ (in the image reconstruction problem of interest it means that $M$ reconstructions are needed). Moreover, from a theoretical point of view, it requires that the function $\hat{\lambda}$ is defined on $[-1, +\infty[^M$ and this is not the case for $\hat{\lambda}_k$.
Finally, the PUKLA estimator is not truly unbiased: it does not provide an absolute quantification of the risk as it is defined up to an additive constant. 
In \cite{santos2009new} the authors introduce a different estimator of the risk with KL loss, defined by
\begin{equation}
\fl \mathrm{REKL}(Y) \coloneqq \sum_{i=1}^M  \hat{\lambda}(Y)_i - Y_i \log \hat{\lambda}(Y)_i + \frac{M Y_i \eta_i}{2\varepsilon \Vert \eta \Vert^2} \left( \log \hat{\lambda}(Y + \varepsilon \eta)_i - \log \hat{\lambda}(Y - \varepsilon \eta)_i
 \right)
\end{equation}
where $\eta \sim \mathcal{N}(0, I)$ is an $M$-dimensional normal random variable and $\varepsilon$ is a small positive constant. 
This estimator is derived from a Taylor expansion of the KL divergence and the second term  comes from a Monte-Carlo approximation involving the trace of a Jacobian matrix. 
Although very effective in practice, it has no rigorous theoretical justification, as it is not proven that the Taylor's remainder is negligible, at least in some asymptotic sense. 
Moreover, as the PUKLA estimator, it is defined up to an unknown additive constant.
Other estimators of the predictive risk, based on the application of the Generalized Cross Validation (GCV) and the Unbiased Predictive Risk Estimator (UPRE) to a Gaussian approximation of the KL divergence, have been presented in \cite{bardsley2009regularization}

In this section we propose a new estimator of $\mathbb{E}(D_{\mathrm{KL}}(\lambda, \hat{\lambda}(Y)))$ that is analogous to SURE \eref{SURE for gauss} when Poisson data are considered. 
We also give some mild conditions under which the proposed estimator is unbiased in an asymptotic regime, i.e. when every component of the mean $\lambda$ is sufficiently large. 
Moreover, differently from the previous ones, the proposed estimator is not defined up to an additive constant, but it estimates the actual value of the risk.
First of all we give an asymptotic Stein's Lemma for Poisson random variables. 
The proof of this result is given in the Appendix.
\begin{lemma}[Asymptotic Stein's Lemma for Poisson random variables]\label{Stein's lemma (Poisson)}
Let $Y$ be a multivariate Poisson random variable with mean value $\lambda \in \mathbb{R}_{++}^M$ and independent components, $\mathcal{C}\subset\mathbb{R}^M_{++}$ a closed convex cone containing $\lambda$ and $f\colon \mathbb{R}^M_{+} \to \mathbb{R}$ a function such that:
\begin{enumerate}[label=(\roman*)]
\item
$\mathbb{E} \left( \vert Y_i f(Y) \vert \right) < +\infty$ for all $i\in\{1, \dots, M\}$;
\item $f\in C^{2}(\mathbb{R}^M_+)$;
\item for all $j\in \{ 1, \dots, M \}$ there exist $c_{j}$, $\varepsilon_j>0$ such that 
\begin{equation}
\vert \partial_j f(y) \vert \leq \frac{c_j}{\Vert y \Vert + \varepsilon_j }
\end{equation}
for all $y\in\mathbb{R}_+^M$;
\item for all $k$, $l\in \{ 1, \dots, M\}$ there exist $\varepsilon_{k,l}$, $c_{k,l}>0$ such that
\begin{equation}
\left\vert \partial_k\partial_l f(y) \right\vert \leq \frac{c_{k,l}}{\Vert y \Vert^2 + \varepsilon_{k,l}}
\end{equation}
for all $y\in\mathbb{R}_+^M$.\label{Stein's Lemma hp iv} 
\end{enumerate}
Then, for all $i\in\{1, \dots, M \}$
\begin{equation}
\label{thesis stein's lemma poisson}
\mathbb{E} \left(\left( Y_i - \lambda_i \right) f\left( Y \right)\right) = \mathbb{E} \left(Y_i \partial_i f (Y) \right) + O \left(\Vert \lambda \Vert^{-1/2} \right)
\end{equation}
as $\Vert \lambda \Vert \to +\infty$ in $\mathcal{C}$.
\end{lemma}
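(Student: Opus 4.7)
The plan is to combine the exact Stein--Chen (Hudson) identity for Poisson variables with a second-order Taylor expansion, and then to control the resulting remainder by Poisson concentration on the cone $\mathcal{C}$. The starting point is the size-biased identity: for any $g\colon \mathbb{R}^M_+\to\mathbb{R}$ with $\mathbb{E}(\vert Y_i g(Y)\vert)<+\infty$, direct manipulation of the Poisson mass function via the shift $y_i\mapsto y_i-1$ gives $\mathbb{E}(Y_i g(Y))=\lambda_i\,\mathbb{E}(g(Y+e_i))$. I would apply this identity twice: with $g=f$, whose integrability is hypothesis (i), to obtain $\mathbb{E}((Y_i-\lambda_i)f(Y))=\lambda_i\,\mathbb{E}(f(Y+e_i)-f(Y))$; and with $g=\partial_i f$, whose integrability follows from (iii) (which even yields the uniform bound $\vert Y_i\partial_i f(Y)\vert\leq c_i$), to obtain $\mathbb{E}(Y_i\partial_i f(Y))=\lambda_i\,\mathbb{E}(\partial_i f(Y+e_i))$.

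Using the $C^2$-regularity in (ii), I would next expand $f(Y)$ around $Y+e_i$ via Taylor's theorem with integral remainder,
\[
f(Y)=f(Y+e_i)-\partial_i f(Y+e_i)+\int_0^1(1-t)\,\partial_i^2 f(Y+(1-t)e_i)\,dt,
\]
subtract the two identities of the previous step, and interchange expectation with the $t$-integral (justified by the uniform bound $\vert\partial_i^2 f\vert\leq c_{i,i}/\varepsilon_{i,i}$ from hypothesis (iv)). This produces
\[
\mathbb{E}((Y_i-\lambda_i)f(Y))-\mathbb{E}(Y_i\partial_i f(Y))=-\lambda_i\int_0^1(1-t)\,\mathbb{E}(\partial_i^2 f(Y+(1-t)e_i))\,dt.
\]
Because $Y\geq 0$ componentwise, $\Vert Y+(1-t)e_i\Vert\geq\Vert Y\Vert$, so (iv) sharpens to $\vert\partial_i^2 f(Y+(1-t)e_i)\vert\leq c_{i,i}/(\Vert Y\Vert^2+\varepsilon_{i,i})$, and the whole remainder is dominated by a constant multiple of $\lambda_i\,\mathbb{E}(1/(\Vert Y\Vert^2+\varepsilon_{i,i}))$.

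Everything then reduces to showing $\lambda_i\,\mathbb{E}(1/(\Vert Y\Vert^2+\varepsilon))=O(\Vert\lambda\Vert^{-1/2})$ as $\Vert\lambda\Vert\to+\infty$ in $\mathcal{C}$, and this is where the cone hypothesis becomes essential. Intersecting $\mathcal{C}$ with the unit sphere produces a compact subset of $\mathbb{R}^M_{++}$ bounded away from the coordinate hyperplanes, so there exists $\kappa>0$ with $\lambda_i\geq\kappa\Vert\lambda\Vert$ uniformly on $\mathcal{C}$. A standard Poisson Chernoff bound then gives $P(Y_i<\lambda_i/2)\leq e^{-c\lambda_i}\leq e^{-c\kappa\Vert\lambda\Vert}$, while on the intersection event $\bigcap_i\{Y_i\geq\lambda_i/2\}$ one has $\Vert Y\Vert^2\geq\Vert\lambda\Vert^2/4$. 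Splitting the expectation along this event and its complement yields $\mathbb{E}(1/(\Vert Y\Vert^2+\varepsilon))\leq 4/\Vert\lambda\Vert^2+Me^{-c\kappa\Vert\lambda\Vert}/\varepsilon$, so multiplication by $\lambda_i\leq\Vert\lambda\Vert$ produces an $O(\Vert\lambda\Vert^{-1})$ bound, which is \emph{a fortiori} the claimed $O(\Vert\lambda\Vert^{-1/2})$. I anticipate this uniform concentration step to be the main obstacle of the argument: without the cone assumption some $\lambda_i$ could remain bounded as $\Vert\lambda\Vert\to+\infty$, and then the $1/\Vert Y\Vert^2$ singularity of the bound in (iv) near the origin would no longer be negligible, breaking the asymptotic rate.
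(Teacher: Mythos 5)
Your proof is correct, and it takes a genuinely different route from the paper's. The paper adds and subtracts $\lambda_i\partial_i f(\lambda)$ and performs a full second-order Taylor expansion of $f(Y)$ around the deterministic point $\lambda$; the Lagrange remainder then involves $\partial_k\partial_l f(\xi_{\lambda,Y})$ at a random intermediate point which may lie near the origin, and controlling it forces a decomposition of $\mathbb{N}^M$ into $2^M$ regions according to whether each $n_i$ exceeds $\lfloor\lambda_i/2\rfloor$, supported by three auxiliary lemmas (a Stirling-type bound, an exponential bound on the lower Poisson tail, and the norm equivalence on the cone). Your route instead starts from the exact size-bias identity $\mathbb{E}(Y_i g(Y))=\lambda_i\mathbb{E}(g(Y+e_i))$ --- the same Hudson identity the paper cites in connection with PURE and PUKLA, but applied with an \emph{upward} shift so that no extension of $f$ to $[-1,+\infty[^M$ is required --- and Taylor-expands only along the $i$-th coordinate over a unit interval, so the intermediate point $Y+(1-t)e_i$ satisfies $\Vert Y+(1-t)e_i\Vert\geq\Vert Y\Vert$ deterministically. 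The whole error then collapses to a multiple of $\lambda_i\,\mathbb{E}\bigl(1/(\Vert Y\Vert^2+\varepsilon_{i,i})\bigr)$, which a single Chernoff-plus-union-bound argument (using, exactly as in the paper's norm-equivalence lemma, that $\lambda_i\geq\kappa\Vert\lambda\Vert$ on the cone) shows to be $O(\Vert\lambda\Vert^{-1})$. What your approach buys is a substantially shorter proof, the elimination of the $2^M$-fold case analysis, and a sharper remainder $O(\Vert\lambda\Vert^{-1})$ in place of the paper's $O(\Vert\lambda\Vert^{-1/2})$; hypotheses (i) and (iii) are precisely what legitimates the two applications of the identity, and the uniform bound $\vert\partial_i^2 f\vert\leq c_{i,i}/\varepsilon_{i,i}$ from (iv) justifies the interchange of expectation with the $t$-integral. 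The only steps worth spelling out in a final write-up are the one-line reindexing proof of the size-bias identity and the lower-tail estimate $P(Y_i<\lambda_i/2)\leq e^{-c\lambda_i}$ with an explicit universal $c>0$, which together play the role of the paper's Poisson-tail lemmas.
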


The analogy between \eref{Stein's lemma} and \eref{thesis stein's lemma poisson} is in the fact that the left-hand side is the same, while in the right-hand side the term $\sigma^2$, which is the known variance of $Y_i$ in the normal case, is replaced with $Y_i$ that is an unbiased estimator of the variance in the Poisson case. 
We point out that the introduction of the cone $\mathcal{C}$ in \eref{thesis stein's lemma poisson} is just a technical fact needed by the proof in order to have a result which is uniform in the variable $\lambda$ while keeping the property that every component of $\lambda$ tends to $+\infty$ as the norm $\Vert \lambda \Vert$ does.

Lemma \ref{Stein's lemma (Poisson)} allow us to derive an estimator of the risk with KL loss for Poisson variates. The proof is contained in the Appendix.
\begin{thm}[Poisson Asymptotically Unbiased KL risk estimator]\label{KL risk theorem}
Let $Y$ be a multivariate Poisson random variable with mean value $\lambda \in \mathbb{R}_{++}^M$ and independent components and let $\hat{\lambda}(Y)$ be an estimator of $\lambda$ taking values in $\mathbb{R}^M_{++}$. Let us assume that every component of the function $\log\hat{\lambda}\colon \mathbb{R}_+^M \to \mathbb{R}^M$ satisfies the hypotesis $(i)$-$(iv)$ of Lemma \ref{Stein's lemma (Poisson)}. Then, 
\begin{equation}\label{Kullback-Leibler risk estimator}
\mathrm{PAUKL}\left( Y \right) \coloneqq D_{\mathrm{KL}} ( Y, \hat{\lambda}(Y) ) + (Y \nabla) \cdot \log\hat{\lambda}(Y) - \frac{M}{2} ~,
\end{equation}
where $Y \nabla \coloneqq (Y_1 \partial_1, \ldots, Y_M \partial_M)$, is an asymptotically unbiased estimator of the risk $\mathbb{E}(D_{\mathrm{KL}}(\lambda, \hat{\lambda}(Y))$
in the sense that, if $\mathcal{C} \subset \mathbb{R}^M_{++}$ is a closed cone containing $\lambda$, we have
\begin{equation}
\mathbb{E}(D_{\mathrm{KL}}(\lambda, \hat{\lambda}(Y)) = \mathbb{E} \left( \mathrm{PAUKL}\left( Y \right) \right) + O \left(\Vert \lambda \Vert^{-1/2}\right) ~.
\end{equation}
as $\Vert \lambda \Vert \to +\infty$ in $\mathcal{C}$.
\end{thm}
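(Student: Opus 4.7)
The plan is to expand the two KL divergences, separate the resulting sum into a part that depends on the estimator $\hat\lambda$ and a part that depends only on $Y$ and $\lambda$, and estimate each part using Lemma \ref{Stein's lemma (Poisson)} and moment expansions, respectively. Writing out $D_{\mathrm{KL}}(Y,\hat\lambda(Y))$ and $D_{\mathrm{KL}}(\lambda,\hat\lambda(Y))$ explicitly, the terms $\hat\lambda(Y)_i - \lambda_i$ cancel against $\hat\lambda(Y)_i - Y_i$ in expectation (since $\mathbb{E}[Y_i]=\lambda_i$), and one is left with
\begin{equation*}
\mathbb{E}[\mathrm{PAUKL}(Y)] - \mathbb{E}[D_{\mathrm{KL}}(\lambda,\hat\lambda(Y))]
= \sum_{i=1}^M\Big(\mathbb{E}[Y_i\log Y_i] - \lambda_i\log\lambda_i - \tfrac{1}{2}\Big)
+ \sum_{i=1}^M\Big(\mathbb{E}[Y_i\partial_i\log\hat\lambda(Y)_i] - \mathbb{E}[(Y_i-\lambda_i)\log\hat\lambda(Y)_i]\Big).
\end{equation*}

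For the second sum, I apply Lemma \ref{Stein's lemma (Poisson)} component-wise to the functions $f_i(y)\coloneqq\log\hat\lambda(y)_i$. The hypothesis of the theorem is precisely that each $f_i$ meets the regularity assumptions (i)--(iv), so each bracket contributes $O(\|\lambda\|^{-1/2})$ as $\|\lambda\|\to+\infty$ in the cone $\mathcal{C}$. Since $M$ is fixed, the full sum is $O(\|\lambda\|^{-1/2})$.

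For the first sum, I compute $\mathbb{E}[Y_i\log Y_i]$ for $Y_i\sim\mathcal{P}(\lambda_i)$ by Taylor-expanding $\log(Y_i/\lambda_i)=\log(1+(Y_i-\lambda_i)/\lambda_i)$ and writing
\begin{equation*}
Y_i\log Y_i = Y_i\log\lambda_i + (Y_i-\lambda_i) + \frac{(Y_i-\lambda_i)^2}{2\lambda_i} + \text{remainder}.
\end{equation*}
Using the Poisson moments $\mathbb{E}[(Y_i-\lambda_i)^k]=O(\lambda_i^{\lceil k/2\rceil})$, the leading-order expectations give $\lambda_i\log\lambda_i+1/2$, and the remainder is $O(1/\lambda_i)$. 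In the cone $\mathcal{C}\subset\mathbb{R}^M_{++}$ every $\lambda_i$ is bounded below by a constant multiple of $\|\lambda\|$, so the first sum is $O(\|\lambda\|^{-1})$, which is absorbed into $O(\|\lambda\|^{-1/2})$.

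The main obstacle is the Taylor step for $y\mapsto y\log y$, whose derivatives blow up near $y=0$ while $\mathcal{P}(\lambda_i)$ puts mass $e^{-\lambda_i}$ at $Y_i=0$ (adopting $0\log 0=0$). I would handle this by splitting the expectation on $\{Y_i\ge 1\}$ and $\{Y_i=0\}$: on the former, the Taylor remainder can be controlled in terms of higher Poisson central moments (which grow only polynomially in $\lambda_i$), while the latter contributes $O(e^{-\lambda_i})$, exponentially small in $\|\lambda\|$ on $\mathcal{C}$. Ensuring the remainder bound is uniform in $\lambda\in\mathcal{C}$ (so that the $O(\cdot)$ constants do not depend on the direction of $\lambda$) is exactly where the conic assumption is used, mirroring its role in Lemma \ref{Stein's lemma (Poisson)}. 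Combining these two estimates and invoking the conic uniformity yields the claimed asymptotic bound.
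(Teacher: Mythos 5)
Your proposal is correct and follows essentially the same route as the paper: the same decomposition of $\mathbb{E}(D_{\mathrm{KL}}(\lambda,\hat\lambda(Y)))$ into the empirical divergence, the cross term $\mathbb{E}\bigl(\sum_i (Y_i-\lambda_i)\log\hat\lambda(Y)_i\bigr)$ handled by Lemma \ref{Stein's lemma (Poisson)}, and the term $\mathbb{E}\bigl(\sum_i Y_i\log(Y_i/\lambda_i)\bigr)=M/2+O(\Vert\lambda\Vert^{-1})$. The only difference is that the paper delegates this last expansion to a cited result (together with the norm-equivalence lemma on the cone to get uniformity), whereas you sketch its proof directly via Taylor expansion and Poisson central moments; your sketch is sound, though for full rigor the remainder control needs the exact low-order central moments (e.g.\ $\mathbb{E}[(Y_i-\lambda_i)^3]=\lambda_i$) rather than the crude $O(\lambda_i^{\lceil k/2\rceil})$ bound.
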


We remark the similarities between SURE and PAUKL: in both the estimators the first term represents the discrepancy between the observed data and the currently estimated mean value, the second term is related to the divergence of the estimator and the third term is the negative expected value between the random variable $Y$ and its mean value.
From a practical point of view, using \eref{Kullback-Leibler risk estimator} instead of \eref{PUKLA} for estimating the KL risk can be very convenient as the computation of \eref{Kullback-Leibler risk estimator} does not require multiple evaluations of $\hat{\lambda}$; on the other hand, if the analytical expression of $(Y \nabla) \log \hat{\lambda}$ is not available or in the case it is recursively defined as in the case of image reconstruction by means of the EM algorithm, the computation of this quantity can be very expensive or even unfeasible. 
We will show in section \ref{numerical simulations} how to obtain an accurate estimate of $(Y\nabla) \cdot \log \hat{\lambda}$ with a Monte-Carlo method.

\section{Predictive risk for the EM algorithm for Poisson data}\label{section: EM properties}
In this section we prove that it is possible to use \eref{Kullback-Leibler risk estimator} for estimating the predictive risk of the $k$-th iterate of the EM algorithm; in particular, we will show that $\log\hat{\lambda}_k$ satisfies the hypothesis of Theorem \ref{KL risk theorem}. Basically, what is needed is that every component of $\log\hat{\lambda}_k$ is smooth and that its first and second order partial derivatives are bounded by a suitable function that decades sufficiently fast at infinity. 
We will prove that these requirements are satisfied thanks to the fact every component of $R_k$ defined in \eref{reg_alg} is a rational function of the data $y$, in which the difference between the degrees of the polynomials at numerator and denominator is 1.

In the following we will define \textit{homogeneous polynomial} of degree $n$ a polynomial $p^{(n)}$ of the form
\begin{equation}
p^{(n)}(y) = \sum_{\alpha\in S(n)} c_\alpha y_1^{\alpha_1} \cdots y_M^{\alpha_M}
\end{equation}
where $S( n )\coloneqq \{\alpha\in\mathbb{N}^M\colon \,\alpha_1 + \dots + \alpha_M = n\}$ and $c_\alpha\in\mathbb{R}$; we will also call $p^{(n)}$ \textit{complete} if $c_\alpha \neq 0$ for all $\alpha\in S(n)$.
\begin{prop}\label{Prop: propr Rk}
The $k$-th iterate of the EM algorithm for Poisson data is a rational function, i.e. for all $j\in\{1, \dots , N \}$ we can write
\begin{equation}
(R_k(y))_j = \frac{p^{(d+1)}_{k,j}(y) + \dots + p^{(0)}_{k,j} }{q^{(d)}_{k,j}(y) + \dots + q^{(0)}_{k,j}}
\end{equation}
with $d\in \mathbb{N}$, where:
\begin{enumerate}[label=(\roman*)]
\item $p^{(n)}_{k,j}$ and $q^{(m)}_{k,j}$ are homogeneous polynomials with nonnegative coefficients for all $n\in \{0,\dots, d+1\}$ and for all $m\in \{0, \dots, d\}$; \label{Prop rational function i}
\item $p^{(d+1)}_{k,j}$, $q^{(d)}_{k,j}$ are complete and $q^{(0)}_{k,j} > 0$. \label{Prop rational function ii}
\end{enumerate}
\end{prop}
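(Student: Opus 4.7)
The plan is to prove (i)-(iii) by induction on $k \ge 1$, exploiting the recursion $R_{k+1}(y) = \psi_y(R_k(y))$ together with the fact that $\psi_y$ is itself a rational operation in $y$ with strictly positive coefficients (since $H_{ij}>0$ and $b_i>0$); the case $k=0$ is the trivial constant $R_0\equiv\mathbf{1}$ to be handled separately. For the base case $k=1$, direct substitution gives
\[
R_1(y)_j = \frac{1}{(H^T\mathbf{1})_j}\sum_{i=1}^M \frac{H_{ij}}{(H\mathbf{1})_i + b_i}\, y_i,
\]
which is linear in $y$ with strictly positive coefficients, so $d=0$, the leading part $p^{(1)}_{1,j}$ is complete, and the constant denominator $q^{(0)}_{1,j}=1>0$, verifying (i)-(iii).

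For the inductive step, assume $R_k(y)_j = P_{k,j}(y)/Q_{k,j}(y)$ in the stated form, with $\deg P_{k,j} = d+1$, $\deg Q_{k,j} = d$, complete leading homogeneous parts $p^{(d+1)}_{k,j}, q^{(d)}_{k,j}$, and $q^{(0)}_{k,j}>0$. I would first clear denominators in the composite operator to obtain
\[
(HR_k(y)+b)_i = \frac{\tilde P_{k,i}(y)}{\tilde Q_k(y)}, \qquad \tilde Q_k(y) \coloneqq \prod_{l=1}^N Q_{k,l}(y),
\]
with $\tilde P_{k,i}(y) = \sum_{j=1}^N H_{ij}\,P_{k,j}(y)\prod_{l\neq j} Q_{k,l}(y) + b_i\tilde Q_k(y)$. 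A degree count gives $\deg \tilde P_{k,i} = Nd+1$ (the $b_i\tilde Q_k$ term contributes only in degree $Nd$), with leading part $\sum_j H_{ij}\,p^{(d+1)}_{k,j}\prod_{l\ne j}q^{(d)}_{k,l}$. Substituting into the EM step then yields
\[
R_{k+1}(y)_j = \frac{P_{k,j}(y)\,\tilde Q_k(y)\,\sum_{i=1}^M H_{ij}\,y_i\prod_{l\ne i}\tilde P_{k,l}(y)}{(H^T\mathbf{1})_j\,Q_{k,j}(y)\,\prod_{i=1}^M \tilde P_{k,i}(y)},
\]
a rational function with numerator of degree $d'+1$ and denominator of degree $d'$, where $d' = d + M(Nd+1)$.

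Conditions (i) and (iii) then propagate immediately: every operation combines nonnegative coefficients with the strictly positive $H_{ij},b_i,(H^T\mathbf{1})_j$, and the constant term of the new denominator equals $(H^T\mathbf{1})_j\,q^{(0)}_{k,j}\prod_i\tilde P_{k,i}(0)$, which is strictly positive because $\tilde P_{k,i}(0)\ge b_i\prod_l q^{(0)}_{k,l}>0$. The main obstacle is (ii), i.e.\ completeness of the leading homogeneous parts. I would first prove the following combinatorial lemma separately: \emph{if $p,q$ are complete homogeneous polynomials of degrees $n,m$ with strictly positive coefficients, then $pq$ is complete of degree $n+m$, and any positive linear combination of complete homogeneous polynomials of equal degree is complete}. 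The product statement follows from the identity that the coefficient of $y^\gamma$ in $pq$ equals $\sum_{\alpha+\beta=\gamma,\,|\alpha|=n} p_\alpha q_\beta$, all of whose summands are strictly positive. Iterating this lemma propagates completeness from $p^{(d+1)}_{k,j}, q^{(d)}_{k,j}$ through $\tilde P_{k,i}$, through $\prod_i\tilde P_{k,i}$, and hence into the denominator of $R_{k+1,j}$; for the numerator one additionally needs completeness of $\sum_i H_{ij}\,y_i\prod_{l\ne i}(\text{top of }\tilde P_{k,l})$ in degree $1+(M-1)(Nd+1)$, which holds because every monomial of that degree contains at least one variable $y_i$ and hence appears with a positive coefficient in the $i$-th summand. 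Multiplying through by the complete $p^{(d+1)}_{k,j}$ and by $\prod_l q^{(d)}_{k,l}$ then closes the induction. The whole difficulty is thus concentrated in keeping the completeness bookkeeping straight across the cascade of sums and products; the degree counts and the positivity considerations are routine.
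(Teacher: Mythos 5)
Your proposal is correct and follows essentially the same route as the paper: induction on $k$, the same explicit base case $R_1$, the same clearing of denominators in $HR_k(y)+b$, the same degree count $\overline{d}=d+M(Nd+1)=(MN+1)d+M$, and the same tracking of the complete leading parts and the positive constant term of the denominator. The only difference is presentational — you isolate the closure of completeness under products and positive sums as an explicit lemma, which the paper leaves implicit.
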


Thanks to this result, it is possible to show that $\log \hat{\lambda}_k$ satisfies the hypothesis of Theorem \ref{KL risk theorem}.

\begin{prop}\label{Prop: propr log lambda}
Let us fix the $k$-th iterate of the EM algorithm. For all $i\in\{1, \dots, M\}$ the following hold true:
\begin{enumerate}
\item $(\log \hat{\lambda}_k )_i\in C^{\infty}(\mathbb{R}^M_+)$;\label{Lemma propr log: i}
\item for all $j\in \{ 1, \dots, M \}$ there exist $c_{j}$, $\varepsilon_j>0$ such that 
\begin{equation}
\vert \partial_j (\log \hat{\lambda}_k (y) )_i \vert \leq \frac{c_j}{\Vert y \Vert + \varepsilon_j }
\end{equation}
for all $y\in\mathbb{R}_+^M$;\label{Lemma propr log: ii}
\item for all $l$, $r\in \{ 1, \dots, M\}$ there exist $\varepsilon_{l,r}$, $c_{l, r}>0$ such that
\begin{equation}
\left\vert \partial_l\partial_r (\log \hat{\lambda}_k (y))_i \right\vert \leq \frac{c_{l,r}}{\Vert y \Vert^2 + \varepsilon_{l,r}}
\end{equation}
for all $y\in\mathbb{R}_+^M$.\label{Lemma propr log: iii}
\end{enumerate}
\end{prop}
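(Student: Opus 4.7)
The plan is to carry the rational-function structure of $R_k$ supplied by Proposition \ref{Prop: propr Rk} through to $\hat{\lambda}_k = HR_k + b$ and then read off the three claims from elementary asymptotics combined with the quotient rule. First, writing $\hat{\lambda}_k(y)_i = b_i + \sum_{j=1}^N H_{ij}R_k(y)_j$ and clearing denominators, I would represent $\hat{\lambda}_k(y)_i$ as a single rational function $N_i(y)/D_i(y)$. Using the sign and completeness information from Proposition \ref{Prop: propr Rk} together with $H_{ij}>0$ and $b_i>0$, the numerator and denominator inherit non-negative coefficients, $D_i$ inherits a strictly positive constant term $D_i(0)>0$, and $\deg N_i = \deg D_i + 1$. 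Since $D_i$ cannot vanish on $\mathbb{R}^M_+$ and $\hat{\lambda}_k(y)_i \geq b_i > 0$ everywhere, $\hat{\lambda}_k(y)_i$ is smooth and strictly positive there, so $(\log \hat{\lambda}_k)_i \in C^\infty(\mathbb{R}^M_+)$, settling (\ref{Lemma propr log: i}).

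The pivotal intermediate estimate is the two-sided linear bound $c_1(1+\Vert y\Vert) \leq \hat{\lambda}_k(y)_i \leq c_2(1+\Vert y\Vert)$ on $\mathbb{R}^M_+$. The upper bound is a straightforward degree count in $N_i/D_i$, using $D_i(y) \geq D_i(0) > 0$. For the lower bound, near the origin $\hat{\lambda}_k(y)_i \geq b_i$ directly, while for $\Vert y\Vert$ large I would use that the leading forms $p^{(d+1)}_{k,j}$ and $q^{(d)}_{k,j}$ are complete with non-negative coefficients, hence have strictly positive coefficients; this forces them to be strictly positive on $\mathbb{R}^M_+\setminus\{0\}$. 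Restricting the ratio $p^{(d+1)}_{k,j}/q^{(d)}_{k,j}$ to the compact unit sphere inside $\mathbb{R}^M_+$ and using homogeneity then gives $p^{(d+1)}_{k,j}(y)/q^{(d)}_{k,j}(y)\geq \alpha_j\Vert y\Vert$ on $\mathbb{R}^M_+\setminus\{0\}$, after which a standard domination argument (the lower-order remainders become subdominant) yields $\hat{\lambda}_k(y)_i \geq c\Vert y\Vert$ for $\Vert y\Vert$ large. Combining the two regimes gives the linear lower bound.

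With this bound, (\ref{Lemma propr log: ii}) and (\ref{Lemma propr log: iii}) reduce to mechanical degree counts via the quotient rule. For (\ref{Lemma propr log: ii}), $\partial_j \hat{\lambda}_k(y)_i = [(\partial_j N_i)D_i - N_i(\partial_j D_i)]/D_i^2$ is a rational function whose numerator has degree at most equal to that of the denominator; together with the positive lower bound on $D_i$ this makes $|\partial_j \hat{\lambda}_k|$ bounded on $\mathbb{R}^M_+$. Dividing by the linear lower bound on $\hat{\lambda}_k$ produces $|\partial_j \log \hat{\lambda}_k(y)_i| \leq c_j/(1+\Vert y\Vert)$, which is of the required form. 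For (\ref{Lemma propr log: iii}), an analogous count shows $|\partial_l\partial_r \hat{\lambda}_k(y)_i| = O(1/(1+\Vert y\Vert))$; writing $\partial_l\partial_r \log\hat{\lambda}_k = (\partial_l\partial_r \hat{\lambda}_k)/\hat{\lambda}_k - (\partial_l\hat{\lambda}_k)(\partial_r\hat{\lambda}_k)/\hat{\lambda}_k^2$ and bounding each summand separately using the first-order bounds already established gives the $O(1/(1+\Vert y\Vert^2))$ decay in (\ref{Lemma propr log: iii}).

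The main obstacle is establishing the linear lower bound on $\hat{\lambda}_k$, which is the only non-mechanical step and the only one that actively uses both the completeness and the positivity of the top-degree homogeneous parts in Proposition \ref{Prop: propr Rk}. All of the derivative estimates then follow from routine degree bookkeeping, but without this lower bound the division by $\hat{\lambda}_k$ in the quotient rule would destroy the decay rates and the hypotheses of Theorem \ref{KL risk theorem} could not be verified.
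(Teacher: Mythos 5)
Your proof is correct and rests on the same mechanism as the paper's: the rational-function structure supplied by Proposition \ref{Prop: propr Rk}, the nonnegativity of all coefficients, and the Weierstrass--homogeneity argument on the unit sphere that converts completeness of the top-degree forms into coercive lower bounds of the type $q^{(d+1)}(y)\geq \tilde q_{d+1}\Vert y\Vert^{d+1}$. The difference is organizational. The paper writes $\partial_j(\log\hat\lambda_k)_i$ directly as a single rational function whose numerator has degree one less than its denominator (the latter having nonnegative coefficients, a complete leading form and a positive constant term), and bounds that ratio en bloc via the elementary inequality $t^s/(t^{d+1}+c)\leq a_s/(t+b_s)$; you instead factor the estimate through the intermediate two-sided bound $\hat\lambda_k(y)_i\asymp 1+\Vert y\Vert$ together with $\vert\partial_j\hat\lambda_k\vert=O(1)$ and $\vert\partial_l\partial_r\hat\lambda_k\vert=O\bigl((1+\Vert y\Vert)^{-1}\bigr)$, and then apply the quotient rule for the logarithm. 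Both routes are rigorous and the degree bookkeeping checks out in each case; yours has the mild advantage of making the linear growth of $\hat\lambda_k$ itself explicit, while the paper's avoids proving that lower bound as a separate step. One small simplification to your argument: the ``domination of lower-order remainders'' step is not needed for the linear lower bound, because nonnegativity of the coefficients lets you bound the numerator of $(R_k(y))_j$ from below by its leading form alone and the denominator from above by a constant multiple of $(1+\Vert y\Vert)^d$, which yields $\hat\lambda_k(y)_i\geq c\Vert y\Vert$ for large $\Vert y\Vert$ directly.
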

The proofs are given in the 
Appendix.

\section{Numerical simulations}\label{numerical simulations}
We perform our numerical tests in the case of deconvolution of astronomical images. In particular, we report the results obtained on two $256 \times 256$ images from the Hubble Space Telescope: the nebula NGC7027 (\url{http://www.oasis.unimore.it/site/home/software.html}) and the Horsehead Nebula (\url{http://www.astropy.org/astropy-data}). 
At every iteration of EM we compute the PAUKL estimator of the predictive risk and we stop the algorithm once its minimum is reached.
We compare the performances of PAUKL with the ones of PUKLA, REKL and of the discrepancy principle for Poisson data.

For applying the PAUKL estimator we need to compute the term $(y\nabla) \cdot \log \hat{\lambda}_k(y)$ in \eref{Kullback-Leibler risk estimator}.
In the case of the EM algorithm, it is possible to obtain a recursive expression of this quantity that at every iteration depends only on the previous one, but the computational burden of the operations required is too high in practice.
In our experiments we decide to approximate it by using a Monte-Carlo method derived in \cite{ramani2008monte}. Exploiting the fact that
 \begin{equation}
 (y\nabla) \cdot \log \hat{\lambda}_k(y) = \lim_{\varepsilon \to 0^+}\mathbb{E} \left((y \eta) \cdot \left(\frac{\log \hat{\lambda}_k(y + \varepsilon \eta) - \log \hat{\lambda}_k(y)}{\varepsilon} \right)\right)
 \end{equation}
 where $\eta\sim \mathcal{N}(0, I)$ is an $M$-dimensional normal random variable and the mean value is computed with respect to $\eta$, we estimate
 \begin{equation}\label{Monte-Carlo approx}
 (y\nabla) \cdot \log \hat{\lambda}_k(y) \approx (y \eta) \cdot \left(\frac{\log \hat{\lambda}_k(y + \varepsilon \eta) - \log \hat{\lambda}_k(y)}{\varepsilon} \right)
 \end{equation}
 with $\varepsilon$ fixed to $10^{-3}$. This estimation has the computational cost of two reconstructions and it is therefore very efficient on large-scale problems. Similarly, in order to be able to apply the PUKLA estimator without performing $M$ reconstructions, we decide to use the approximation given by equation (6.2) in \cite{deledalle2017}:
 \begin{equation}
y \cdot \log\hat{\lambda}_{k\downarrow}(y) \approx y \cdot \log\hat{\lambda}_k (y) - (y \zeta) \cdot (\nabla \log\hat{\lambda}_k (y) \zeta)
 \end{equation}
 where $\zeta = (\zeta_1, \dots, \zeta_M)$ and $\zeta_i$ is a Bernoulli variable with parameter $p = 1/2$ taking values in $\{-1, 1\}$ . This equation reveals an idea of \eref{Kullback-Leibler risk estimator}, but this is the first time that a rigorous proof is given. Following what done in \eref{Monte-Carlo approx}, we use the difference quotient to approximate the derivative, obtaining
\begin{equation}
y \cdot \log\hat{\lambda}_{k\downarrow}(y) \approx y \cdot \log\hat{\lambda}_k (y) - (y \zeta) \cdot \left( \frac{\log \hat{\lambda}_k(y + \varepsilon \zeta) - \log \hat{\lambda}_k(y)}{\varepsilon} \right)
\end{equation}
with $\varepsilon$ fixed to $10^{-3}$. Thanks to these considerations, it is possible to compute an approximated version of PUKLA by performing just two reconstructions instead of $M$.

We point out that in the deconvolution inverse problem the entries of the matrix operator $H$ do not satisfy the hypothesis to be strictly positive. This requirement is heavily used to prove the theoretical results, but in our experience we noticed that it is not necessary in practice.

\subsection{Inverse crime setting}\label{inverse crime setting}
We generate the data $y$ as a realization of a Poisson variable $Y \sim \mathcal{P}(Hx^\ast + b)$, where $x^\ast$ is the image of the nebula, $H$ is the convolution operator with a Gaussian Point Spread Function (PSF) with a standard deviation $\sigma = 3$ expressed in pixel unit, and $b$ is a constant background whose components are equal to $100$ for the nebula NGC7027 and equal to $10$ for the Horse Head nebula . For both images we consider three different levels of statistic, i.e. we rescale $x^\ast$ so that the sum of the pixel values is equal to $10^7$ in the low statistic case, $10^8$ in the medium statistic case and $10^9$ in the high statistic case.

We reconstruct the images with the EM algorithm from a single realization of the data $y$ for each level of statistic. At every iteration $k$ of EM we compute the estimators of the predictive risk (PAUKL, REKL and PUKLA) and the discrepancy with the KL divergence, i.e.
\begin{equation}
d_{\mathrm{KL}}(k, y) \coloneqq D_{\mathrm{KL}}(y, \hat{\lambda}_k(y)) ~.
\end{equation}
We stop the iterations when the minimum of the estimators of the risk is reached or when the Poisson discrepancy principle is satisfied, that is once
\begin{equation}
d_{\mathrm{KL}}(k, y) < \frac{M}{2} ~.
\end{equation}
For the sake of brevity, we report the reconstructions obtained just in the low statistic case for the nebula NGC7027 (see figure \ref{fig:ngc7027 low stat}) and in the high statistic case for the Horse Head nebula (see figure \ref{fig:horsehead high stat}). We do not show the images obtained with PUKLA and REKL as they are too similar to the ones obtained with PAUKL.

\begin{figure}[ht]
\centering
\subfloat[Ground truth]{\includegraphics[width= .24\textwidth]{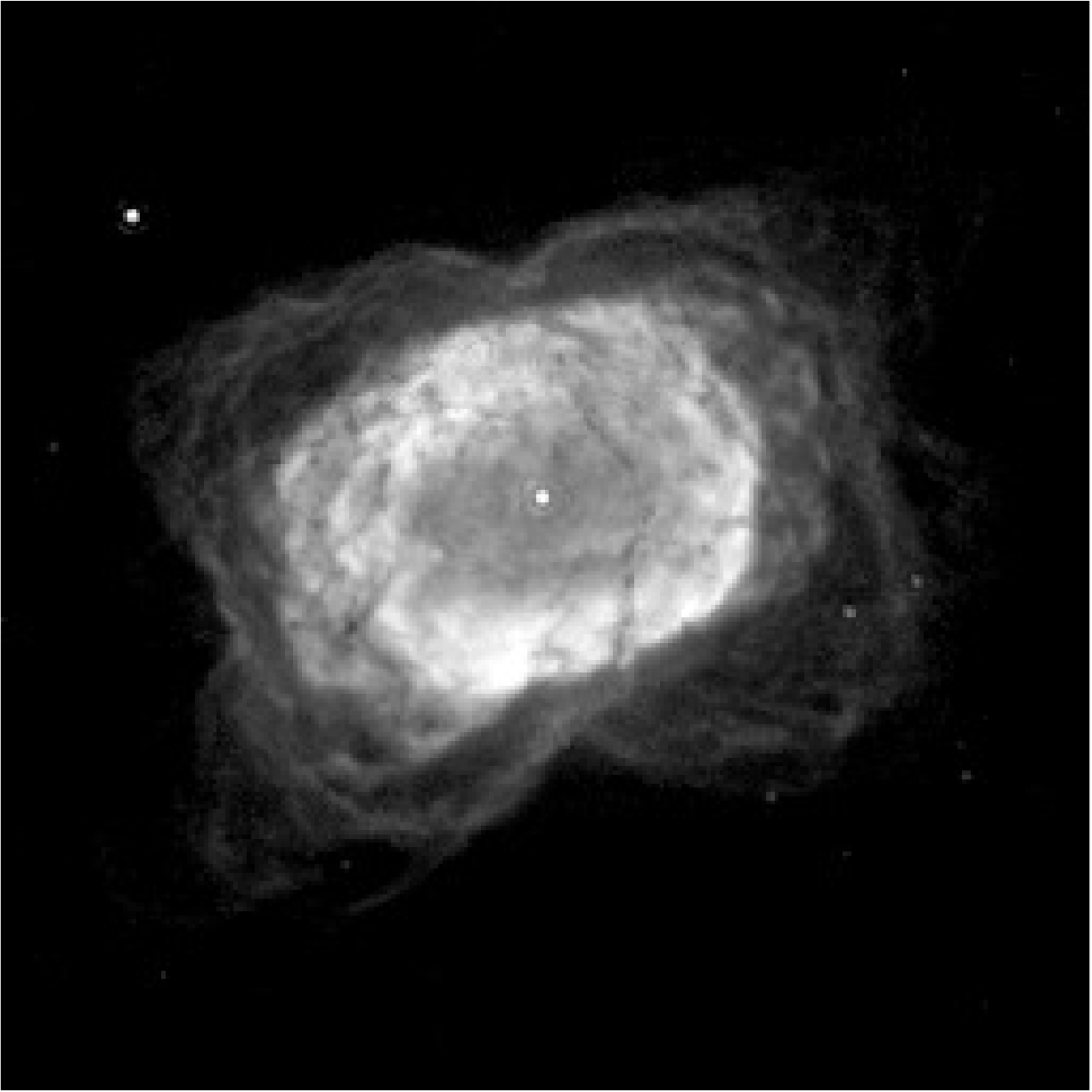}}
\hspace{0.1pt}
\subfloat[Blurred and noisy]{\includegraphics[width= .24\textwidth]{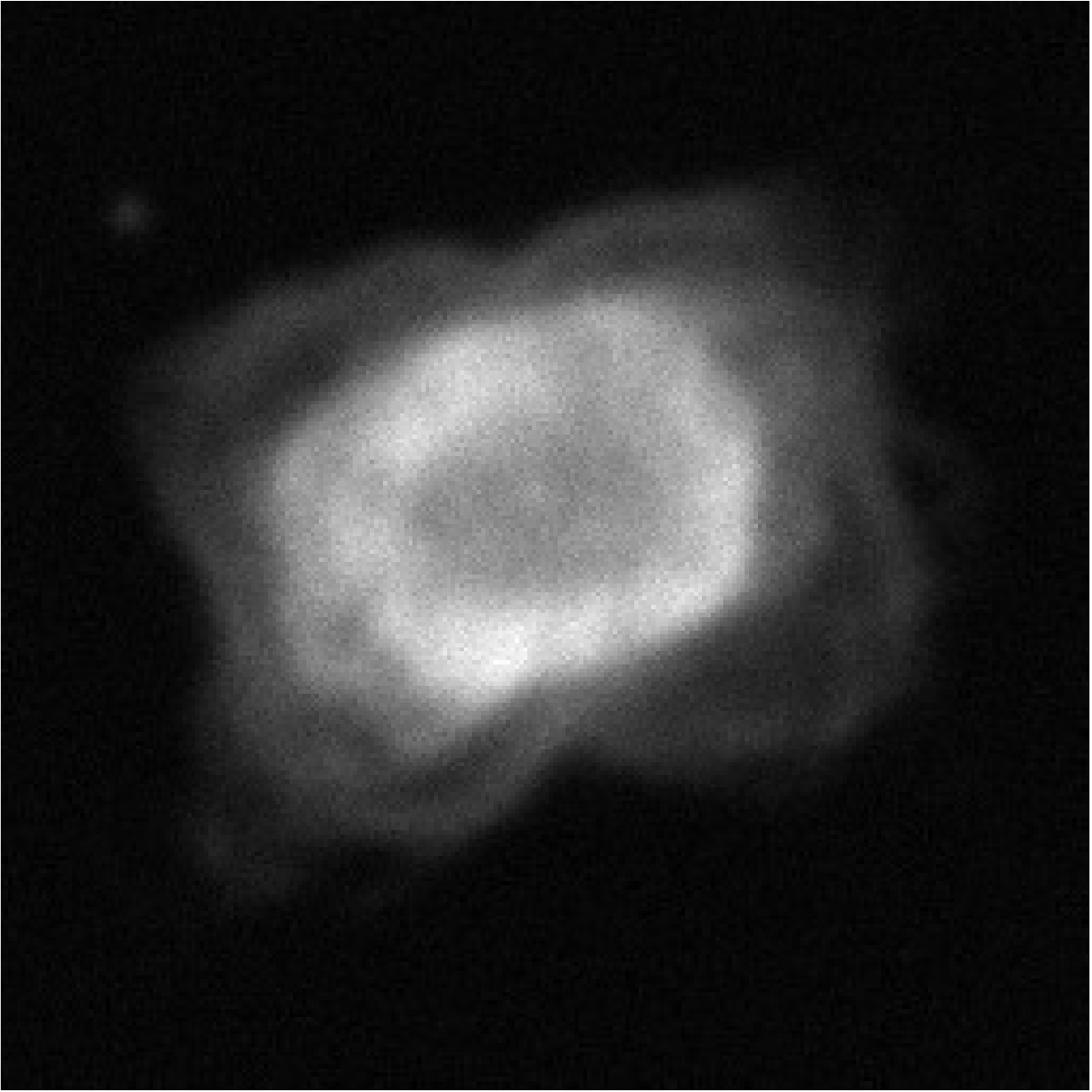}}
\hspace{0.1pt}
\subfloat[PAUKL]{\includegraphics[width= .24\textwidth]{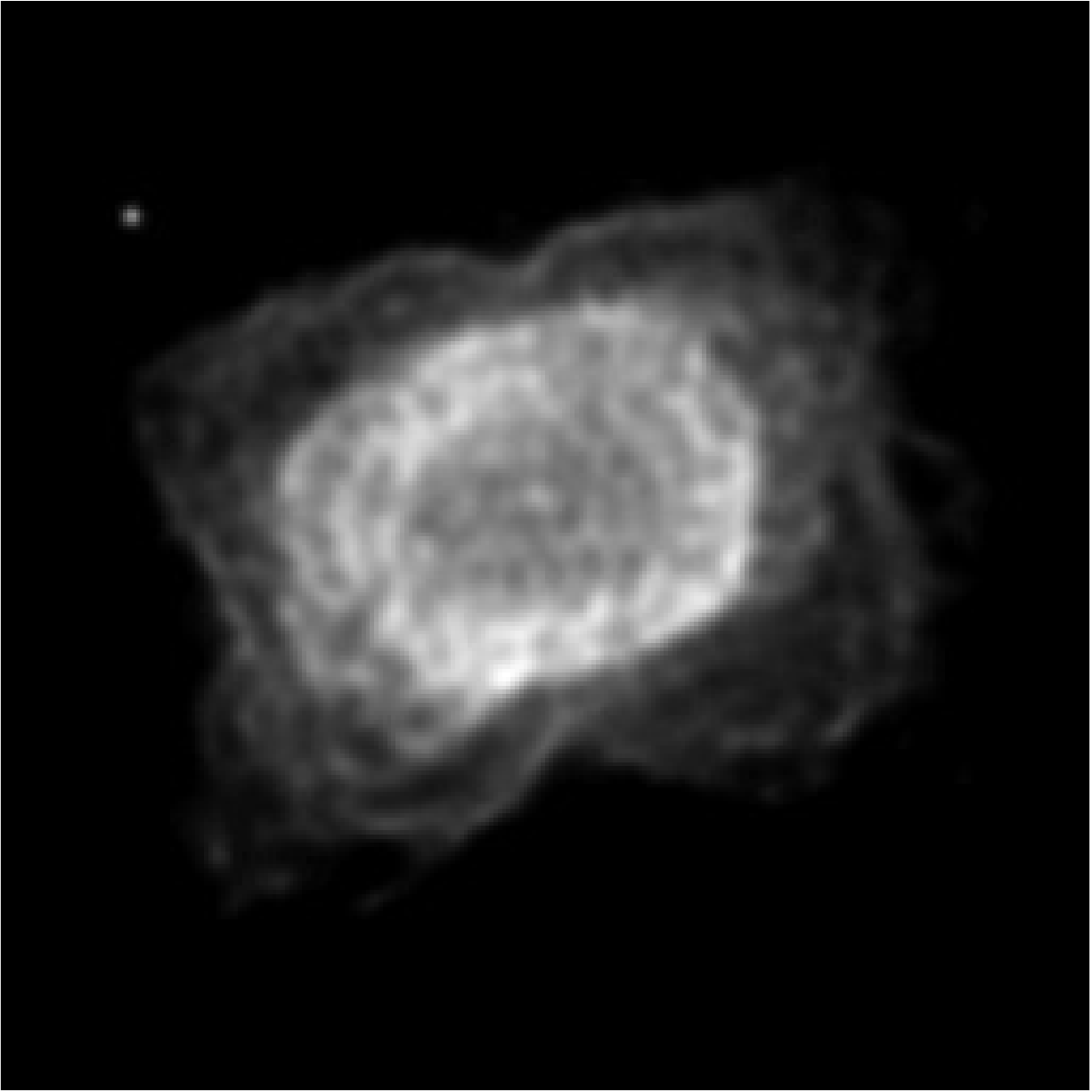}}
\hspace{0.1pt}
\subfloat[Discrepancy, principle][\centering Discrepancy \par principle]{\includegraphics[width= .24\textwidth]{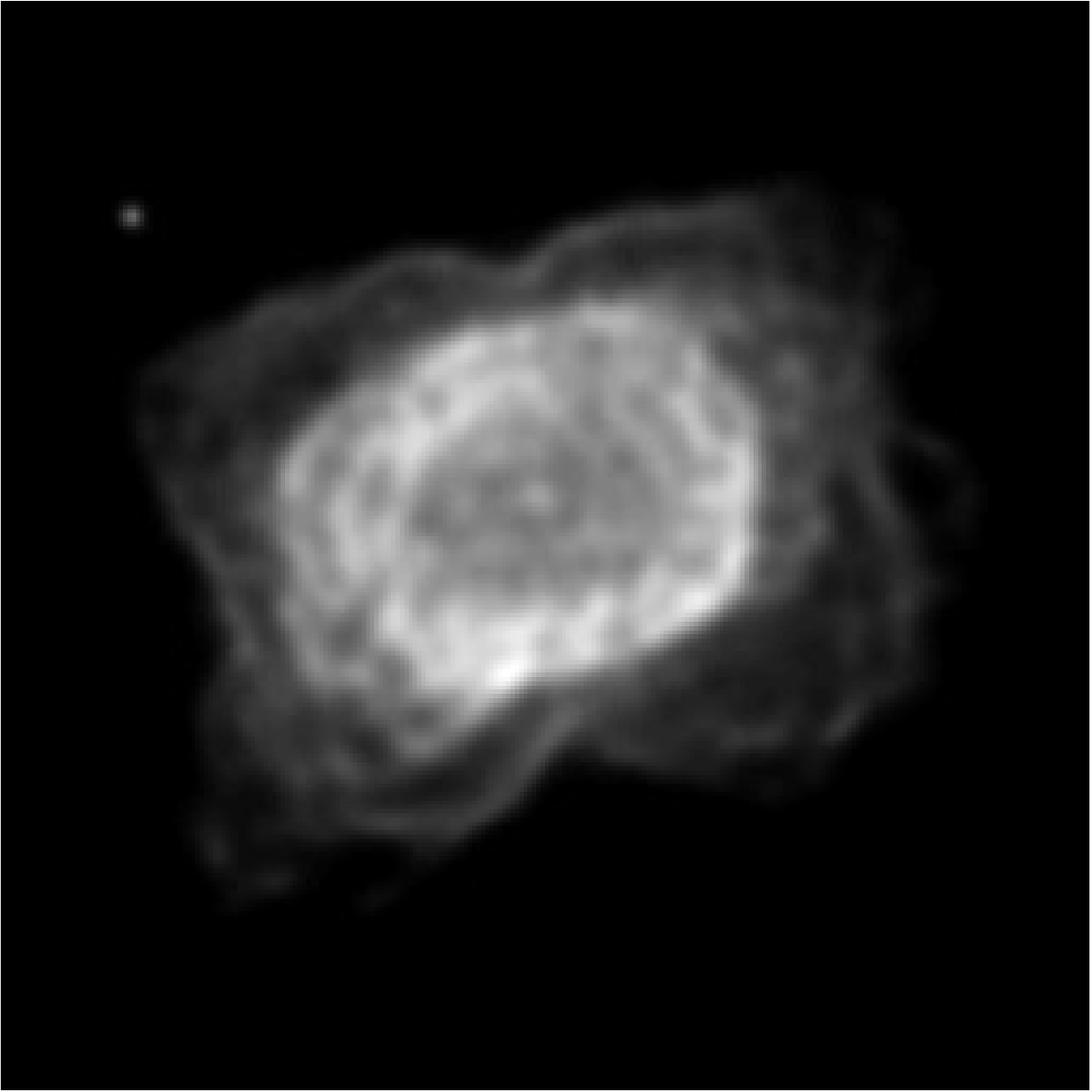}}

\caption{Reconstructions of the nebula NGC7027 in the low statistic case (total flux equal to $10^7$). From left to right: ground truth image, blurred and noisy data, reconstruction obtained with PAUKL and reconstruction obtained with the Poisson discrepancy principle.}
\label{fig:ngc7027 low stat}
\end{figure}

\begin{figure}[t]
\centering
\subfloat[Ground truth]{\includegraphics[width= .24\textwidth]{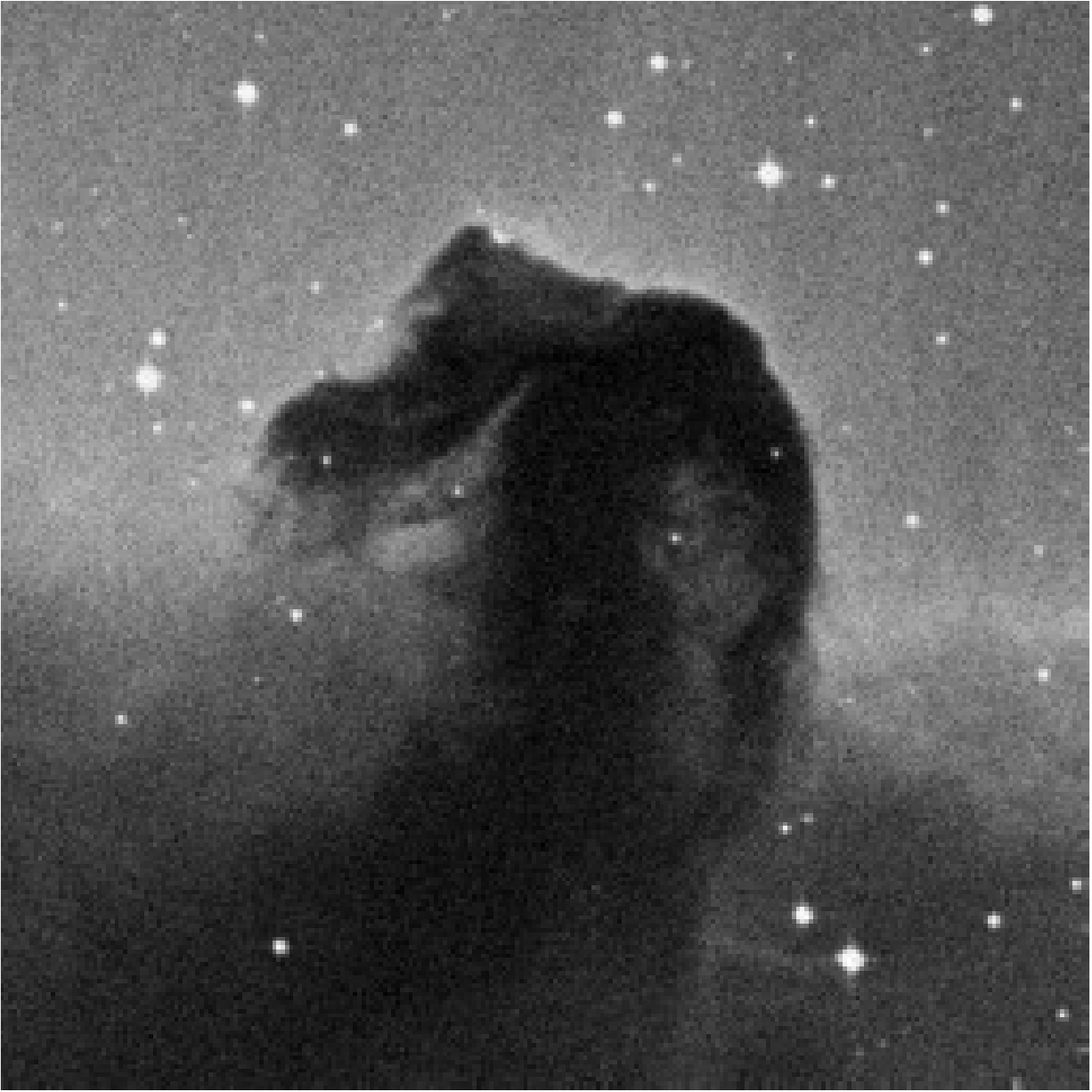}}
\hspace{0.1pt}
\subfloat[Blurred and noisy]{\includegraphics[width= .24\textwidth]{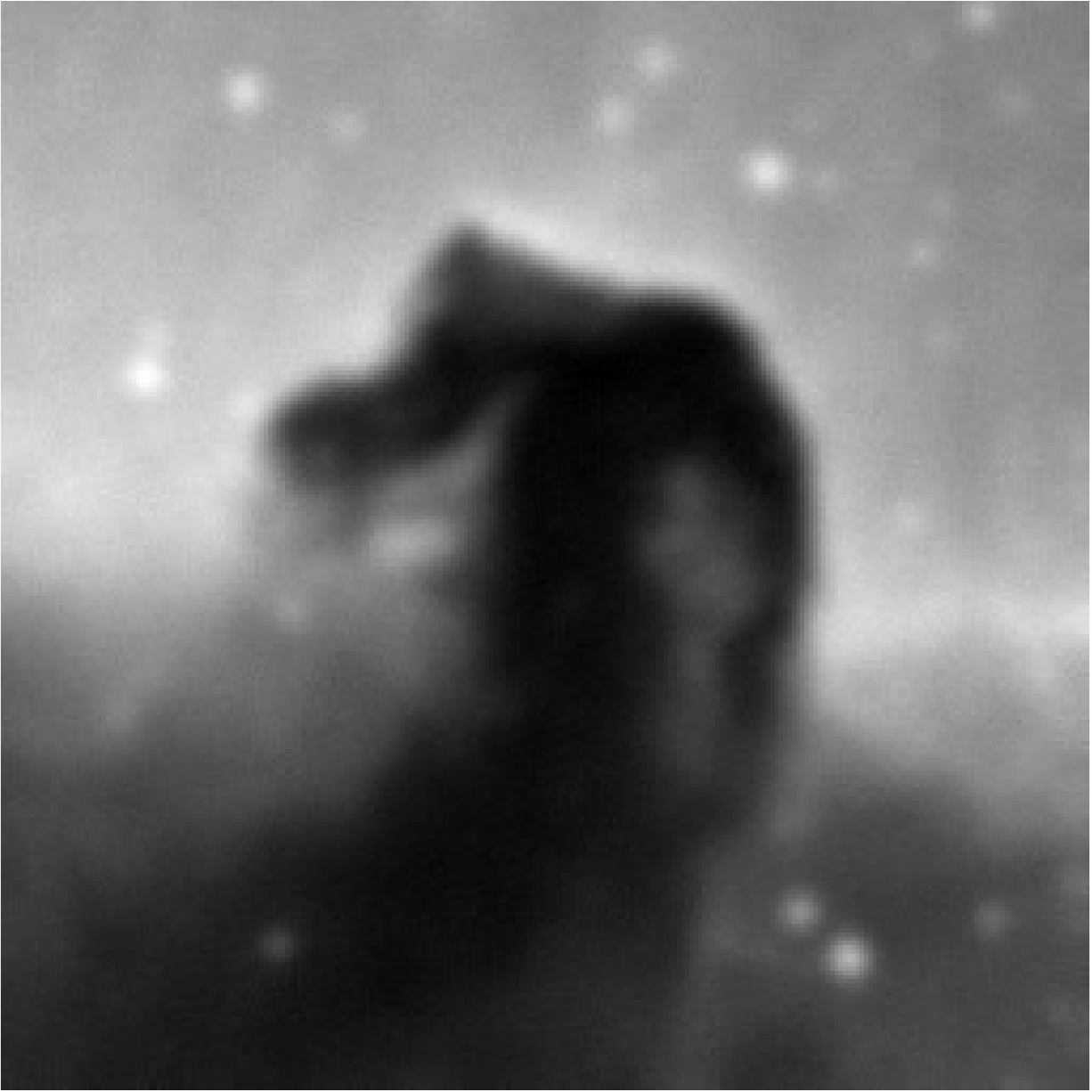}}
\hspace{0.1pt}
\subfloat[PAUKL]{\includegraphics[width= .24\textwidth]{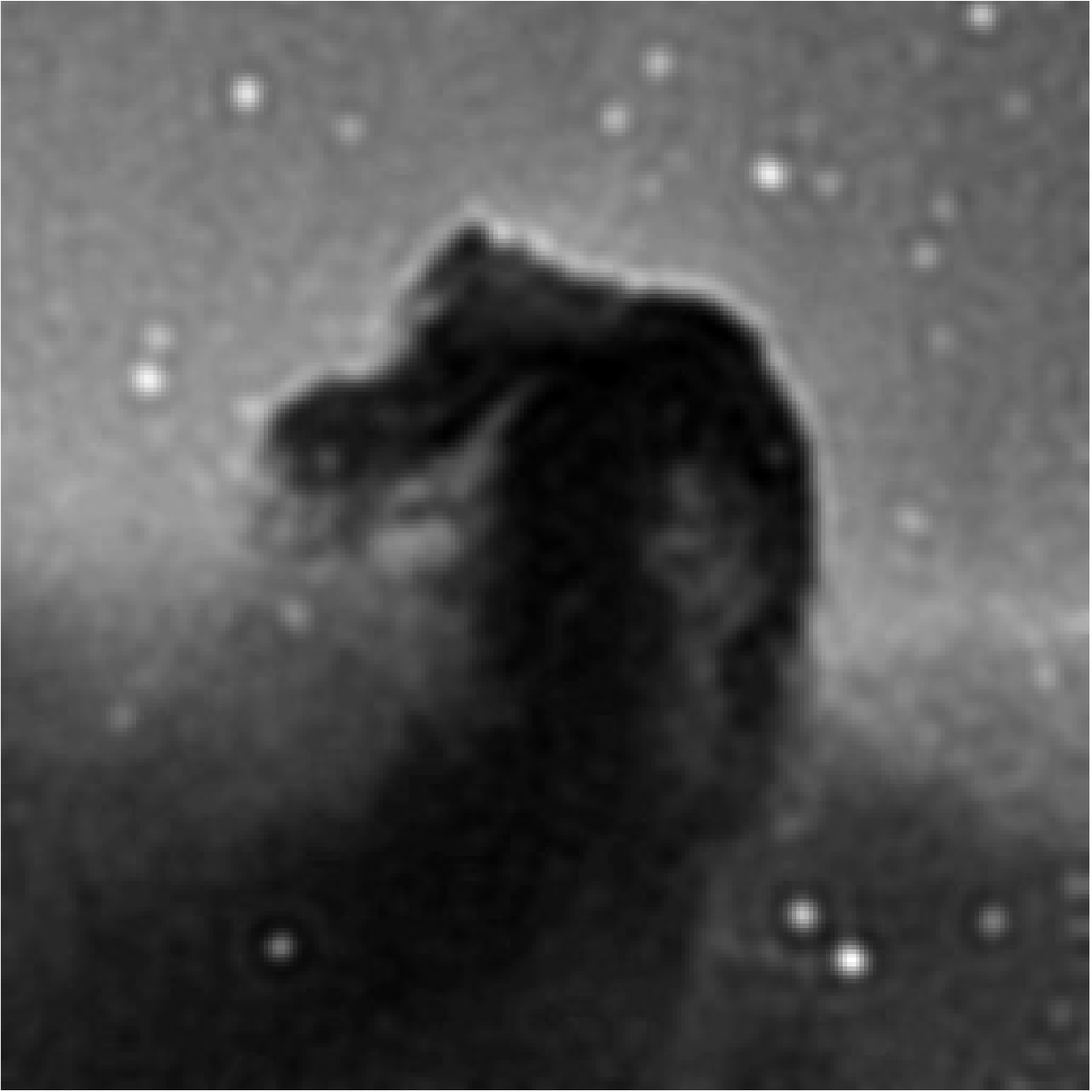}}
\hspace{0.1pt}
\subfloat[Discrepancy, principle][\centering Discrepancy \par principle]{\includegraphics[width= .24\textwidth]{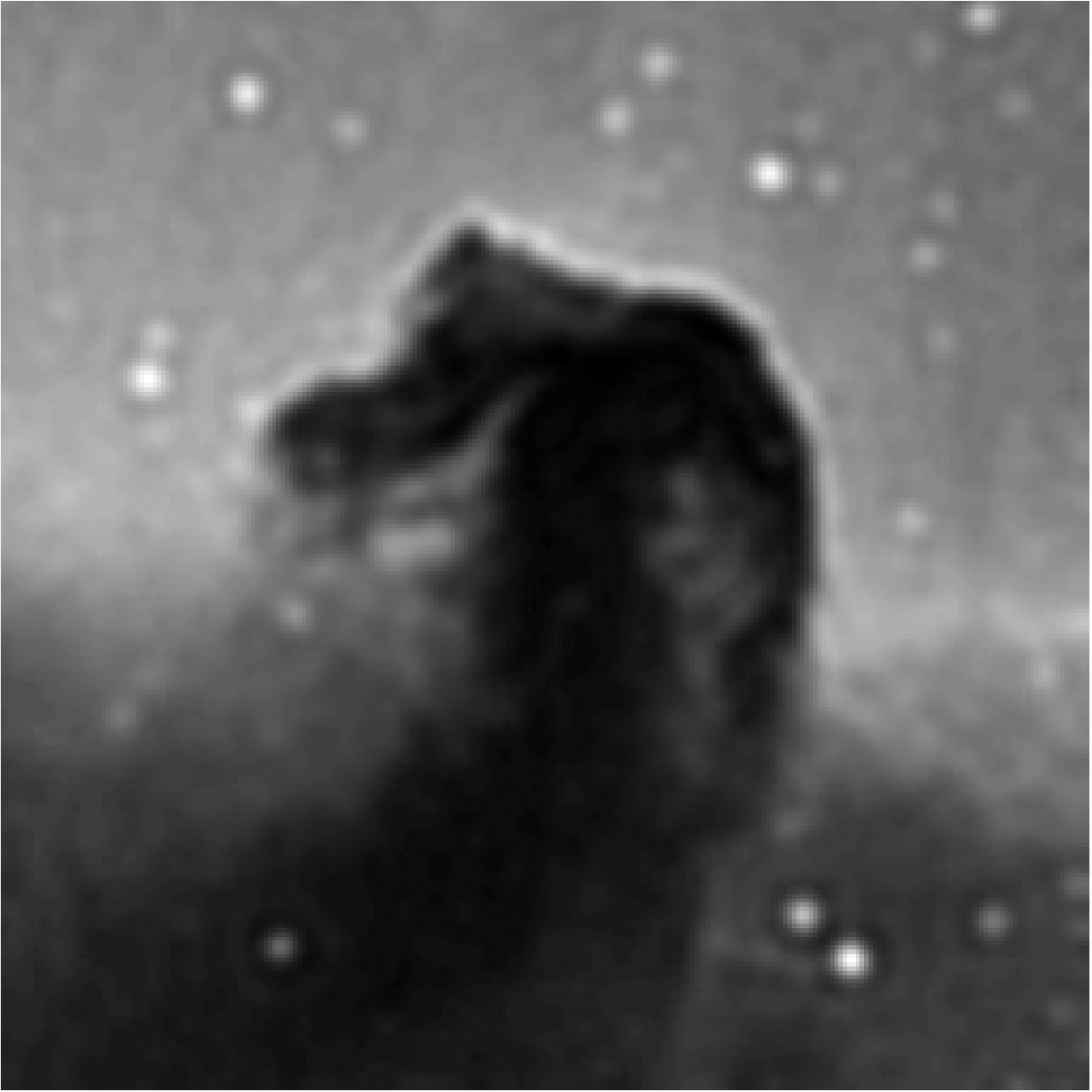}}

\caption{Reconstructions of the Horse Head nebula in the high statistic case (total flux equal to $10^9$). From left to right: ground truth image, blurred and noisy data, reconstruction obtained with PAUKL and reconstruction obtained with the  Poisson discrepancy principle.}
\label{fig:horsehead high stat}
\end{figure}

In order to compare the different estimates of the predictive risk, we perform reconstructions for $25$ realizations of Poisson noise $y_1, \dots, y_{25}$. Defined the Predictive Error (PE) at every iteration as
\begin{equation}
\mathrm{PE}(k,\lambda, y) \coloneqq D_{\mathrm{KL}}(\lambda, \hat{\lambda}_k(y)) ~,
\end{equation}
we compute what we call Sample Predictive Risk (SPR), i.e.
\begin{equation}
\mathrm{SPR}(k, \lambda) \coloneqq \frac{1}{n}\sum_{i=1}^n \mathrm{PE}(k,\lambda, y_i) ~.
\end{equation}
where $n = 25$. This quantity represents the most accurate estimate of the predictive risk that we can have in the simulation tests, as an analytical computation of $\mathbb{E}( D_{\mathrm{KL}}(\lambda, \hat{\lambda}_k(Y)) )$ is not feasible even if $\lambda$ is known. At each iteration we also calculate 
\begin{equation}
\mathrm{PDP}(k, y) \coloneqq \left\vert d_{\mathrm{KL}}(k, y) - \frac{M}{2} \right\vert ~.
\end{equation}
This quantity is related to the Poisson discrepancy principle as the minimum of PDP is reached at the iteration at which the principle is satisfied.
We report in figure \ref{fig:nebula risk medium stat}(a) the SPR and the mean values of PAUKL, REKL, PUKLA and PDP on $25$ realizations of the data for the nebula NGC7027 in the medium statistic case. 
Figure \ref{fig:nebula risk medium stat}(b) contains instead a zoom of figure \ref{fig:nebula risk medium stat}(a) in correspondence to the minimum of the estimators of the predictive risk in order to make the differences between them more visible.
We point out that the quantities reported in figures \ref{fig:nebula risk medium stat}(a) and \ref{fig:nebula risk medium stat}(b) are not directly comparable and therefore each one of them has been translated by adding a constant value.

We have already remarked that PAUKL, differently from REKL and PUKLA, is able to approximate the actual value of the predictive risk. In order to assess this ability, in figure \ref{fig:nebula risk medium stat}(c) we show the SPR and the mean value of PAUKL on 25 realizations of Poisson noise for the image of nebula NGC7027 in the medium statistic case. The shaded area is in correspondence to the standard deviation of the proposed estimator.

We empirically study the problem to understand if the minimum predictive risk is a valid criterion for obtaining good reconstructions. Denoted $\mathrm{err}_{\ell^2}$ and $\mathrm{err}_{\mathrm{KL}}$ the reconstruction errors at every iteration computed with the $\ell^2$ norm and with the KL divergence respectively, i.e.
\begin{equation}
\fl\mathrm{err}_{\ell^2}(k, x^\ast, y) \coloneqq \Vert x^\ast - R_k(y) \Vert ~, \qquad \mathrm{err}_{\mathrm{KL}}(k, x^\ast, y) \coloneqq D_{\mathrm{KL}} (x^\ast, R_k(y)) ~,
\end{equation}
we compute the empirical risks
\begin{equation}
\fl\mathrm{ER}_{\ell^2}(k, x^\ast) \coloneqq \frac{1}{n} \sum_{i=1}^n \mathrm{err}_{\ell^2}(k, x^\ast, y_i) ~, \qquad \mathrm{ER}_{\mathrm{KL}}(k, x^\ast) \coloneqq \frac{1}{n} \sum_{i=1}^n \mathrm{err}_{\mathrm{KL}}(k, x^\ast, y_i) ~,
\end{equation}
with $n=25$, in the case of nebula NGC7027 with medium statistic. We plot them together with the SPR in figure \ref{fig:nebula risk medium stat}(d).
Again, a translation is applied in order to make the lines comparable. 

Finally, we perform $25$ reconstructions with the EM algorithm, for both images and for each level of statistic, from different realization of Poisson noise in the data.
For each reconstruction we compute the number of iterations at which the minimum of PE, PAUKL, PUKLA, REKL, PDP, $\mathrm{err}_{\mathrm{KL}}$ and  $\mathrm{err}_{\ell^2}$ is reached.
We report in table \ref{tab:table iterations} the mean value and the standard deviation of this number calculated on the $25$ reconstructions.

\begin{table}[ht]
\centering

\resizebox{\linewidth}{!}{    \begin{tabular}{cccccccc}
\toprule
Statistic        &PE   &PAUKL   &PUKLA  &REKL   &PDP     &$\mathrm{err}_{\mathrm{KL}}$    &$\mathrm{err}_{\ell^2}$ \\
\midrule
\multicolumn{8}{c}{\textbf{NGC7027}}\\
\cmidrule{2-8}
Low    &$76_{(\pm 4)}$ &$78_{(\pm 4)}$ &$78_{(\pm 5)}$ &$79_{(\pm 6)}$ &$31_{(\pm 6)}$ &$89_{(\pm 3)}$ &$52_{(\pm 3)}$\\ 
Medium &$312_{(\pm 13)}$ &$303_{(\pm 24)}$ &$306_{(\pm 28)}$ &$303_{(\pm 29)}$ &$82_{(\pm 10)}$ &$402_{(\pm 14)}$ &$255_{(\pm 11)}$\\ 
High   &$2229_{(\pm 121)}$ &$2311_{(\pm 223)}$ &$2308_{(\pm 245)}$ &$2329_{(\pm 358)}$ &$373_{(\pm 41)}$ &$2836_{(\pm 122)}$ &$1612_{(\pm 92)}$\\ 

\midrule
\multicolumn{8}{c}{\textbf{Horse Head}}\\
\cmidrule{2-8}
Low    &$7_{(\pm 1)}$ &$7_{(\pm 1)}$ &$6_{(\pm 1)}$ &$7_{(\pm 0)}$ &$4_{(\pm 0)}$ &$5_{(\pm 0)}$ &$6_{(\pm 0)}$\\ 
Medium &$33_{(\pm 1)}$ &$32_{(\pm 2)}$ &$32_{(\pm 2)}$ &$32_{(\pm 2)}$ &$10_{(\pm 1)}$ &$33_{(\pm 1)}$ &$38_{(\pm 1)}$\\ 
High   &$185_{(\pm 11)}$ &$186_{(\pm 10)}$ &$188_{(\pm 11)}$ &$186_{(\pm 13)}$ &$44_{(\pm 3)}$ &$213_{(\pm 9)}$ &$245_{(\pm 9)}$\\ 

\bottomrule
\end{tabular}}

\caption{Comparison between the number of iterations that minimizes the PE, the estimators of the predictive risk (PAUKL, PUKLA and REKL), the PDP and the reconstruction errors ($\mathrm{err}_{\mathrm{KL}}$ and $\mathrm{err}_{\ell^2}$). We report the mean value and the standard deviation corresponding to $25$ Poisson noise realizations of the data.}

\label{tab:table iterations}
\end{table}

From the numerical tests it is evident that the performances of the three estimators of the risk, i.e. PAUKL, REKL and PUKLA, are very similar as every one of them is able to give an accurate estimate of the SPR without the knowledge of $\lambda$ (see figures \ref{fig:nebula risk medium stat}(a), \ref{fig:nebula risk medium stat}(b)). 
The advantage of the proposed estimator with respect to REKL and PUKLA is that it is not biased with a constant and unknown quantity, but it gives an absolute quantification of the predictive risk. 
This is shown in figure \ref{fig:nebula risk medium stat}(c) where one can notice that the SPR falls inside the standard deviation of PAUKL. 
Moreover, we can observe that the stopping rule based on the Poisson discrepancy principle is almost everywhere too much conservative with respect to the rules based on the predictive risk. 
This fact results in figures \ref{fig:ngc7027 low stat} and \ref{fig:horsehead high stat}, where the reconstructions obtained with the discrepancy principle appear too much regularized. 
It is also evident in table \ref{tab:table iterations}, where the mean value of the number of the iterations at which the Poisson discrepancy principle is satisfied is much lower than the mean values of the number of iterations that correspond to the minimum of the quantities $\mathrm{err}_{\mathrm{KL}}$ and $\mathrm{err}_{\ell^2}$. 
On the other hand, the discrepancy principle seems to be more stable as a stopping rule as the standard deviation of the number of iterations at which the PDP is minimized is much lower with respect to the standard deviation of the number of iterations at which the estimators of the risk reach the minimum (see table \ref{tab:table iterations}). 
Finally, despite the ill-posedness of the inverse problem, the predictive risk is an effective rule for stopping the EM algorithm.
The order of magnitude of the number of iterations at which $\mathrm{ER}_{\mathrm{KL}}$ or $\mathrm{ER}_{\ell^2}$ is minimized is the same of the number of iterations at which the SPR or the estimators of the risk are minimized (see figure \ref{fig:nebula risk medium stat}(d)). Same considerations hold true also for the PE and the reconstruction errors $\mathrm{err}_{\mathrm{KL}}$ and $\mathrm{err}_{\ell^2}$ in the case of a single realization of the noise (see table \ref{tab:table iterations}).
The EM algorithm with Poisson data is slow, therefore even if the iteration selected by a predictive risk stopping rule is not exactly the same as the one that corresponds to the minimum of $\mathrm{err}_{\mathrm{KL}}$ or $\mathrm{err}_{\ell^2}$, but it is rather close to it, then the difference in the reconstructed images is almost negligible.

\begin{figure}[t]
\centering
\subfloat[]{\includegraphics[width= .48\textwidth]{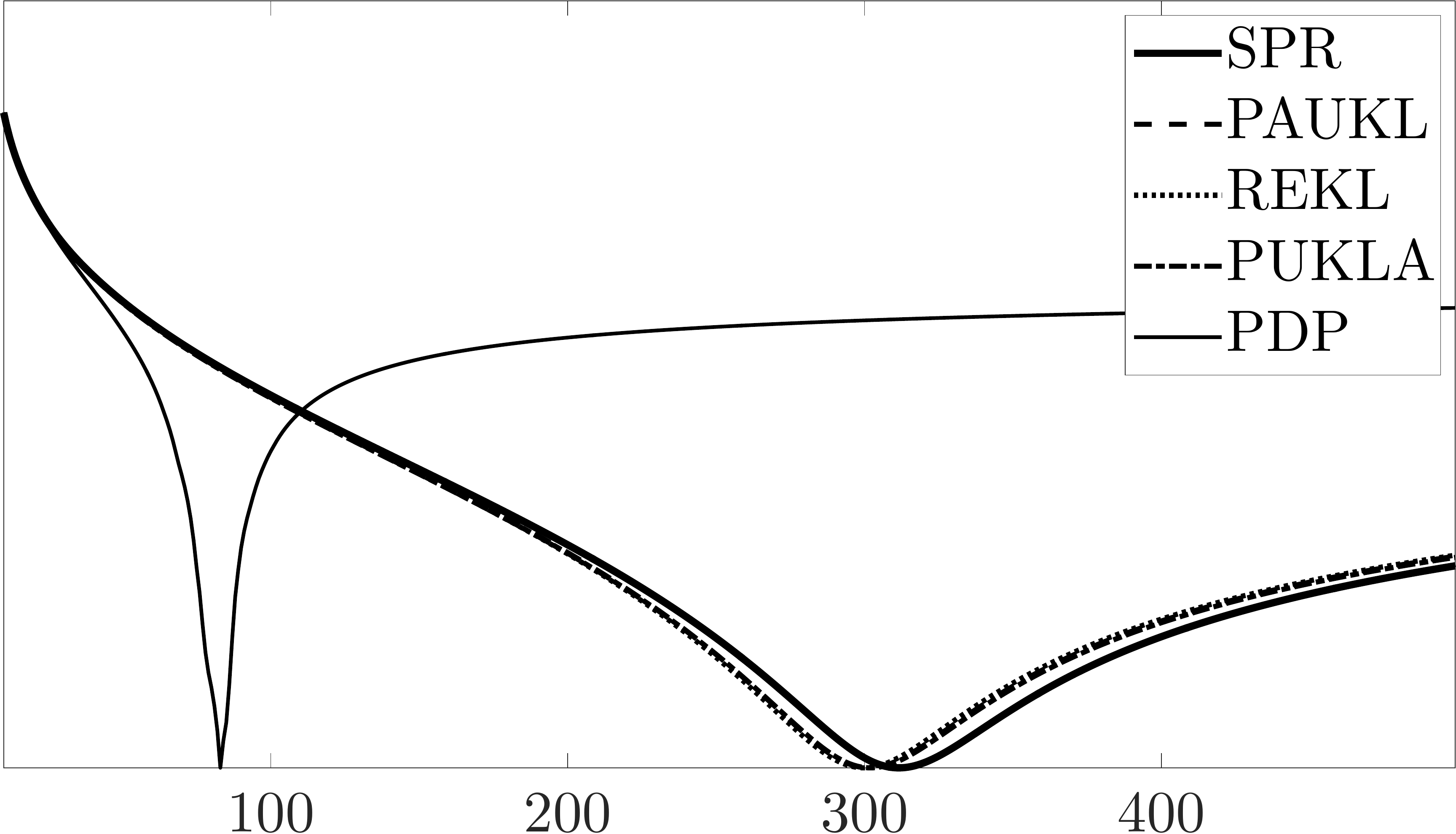}}
\hspace{0.1pt}
\subfloat[]{\includegraphics[width= .48\textwidth]{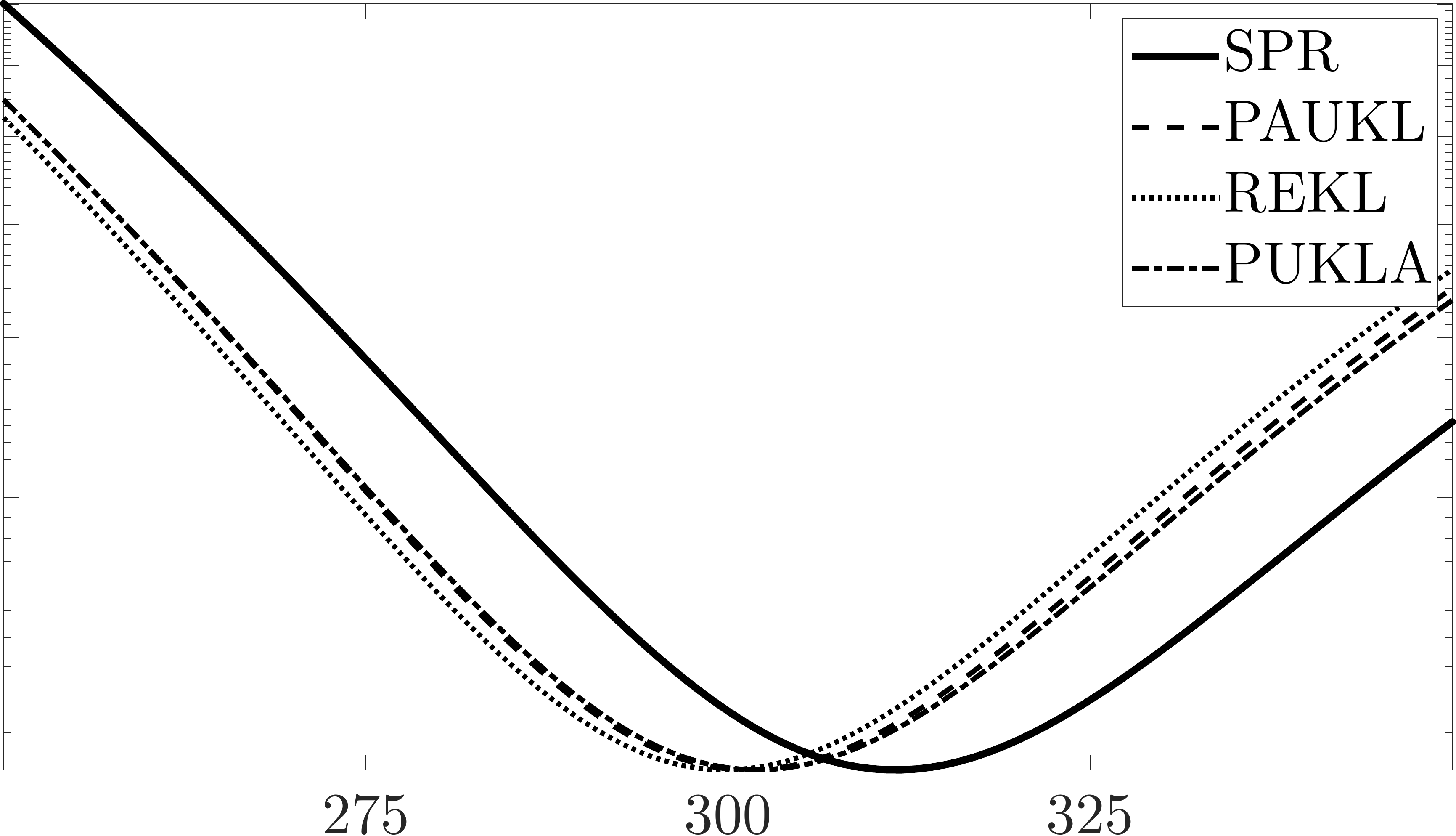}} \\

\subfloat[]{\includegraphics[width= .48\textwidth]{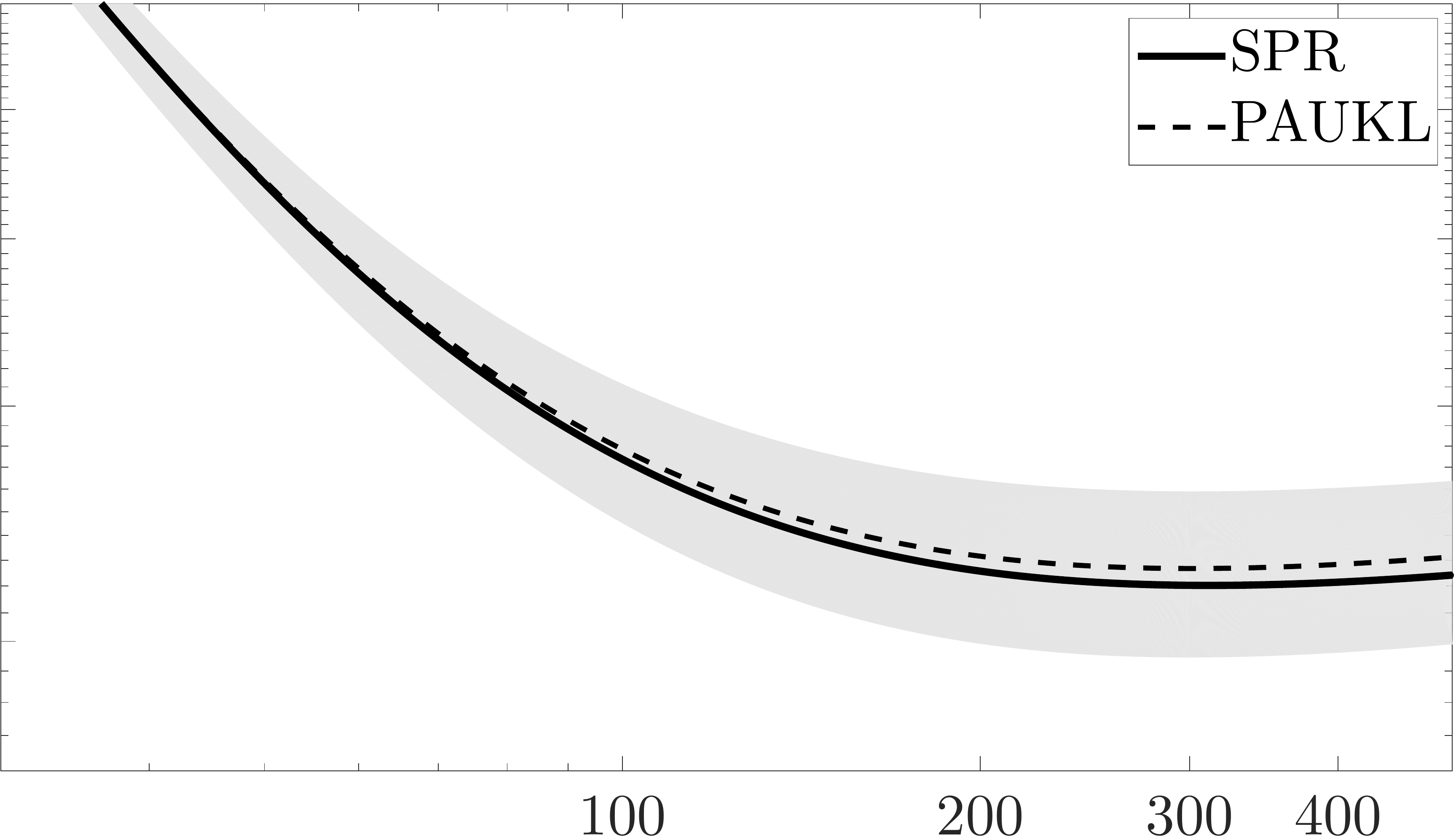}}
\hspace{0.1pt}
\subfloat[][]{\includegraphics[width= .48\textwidth]{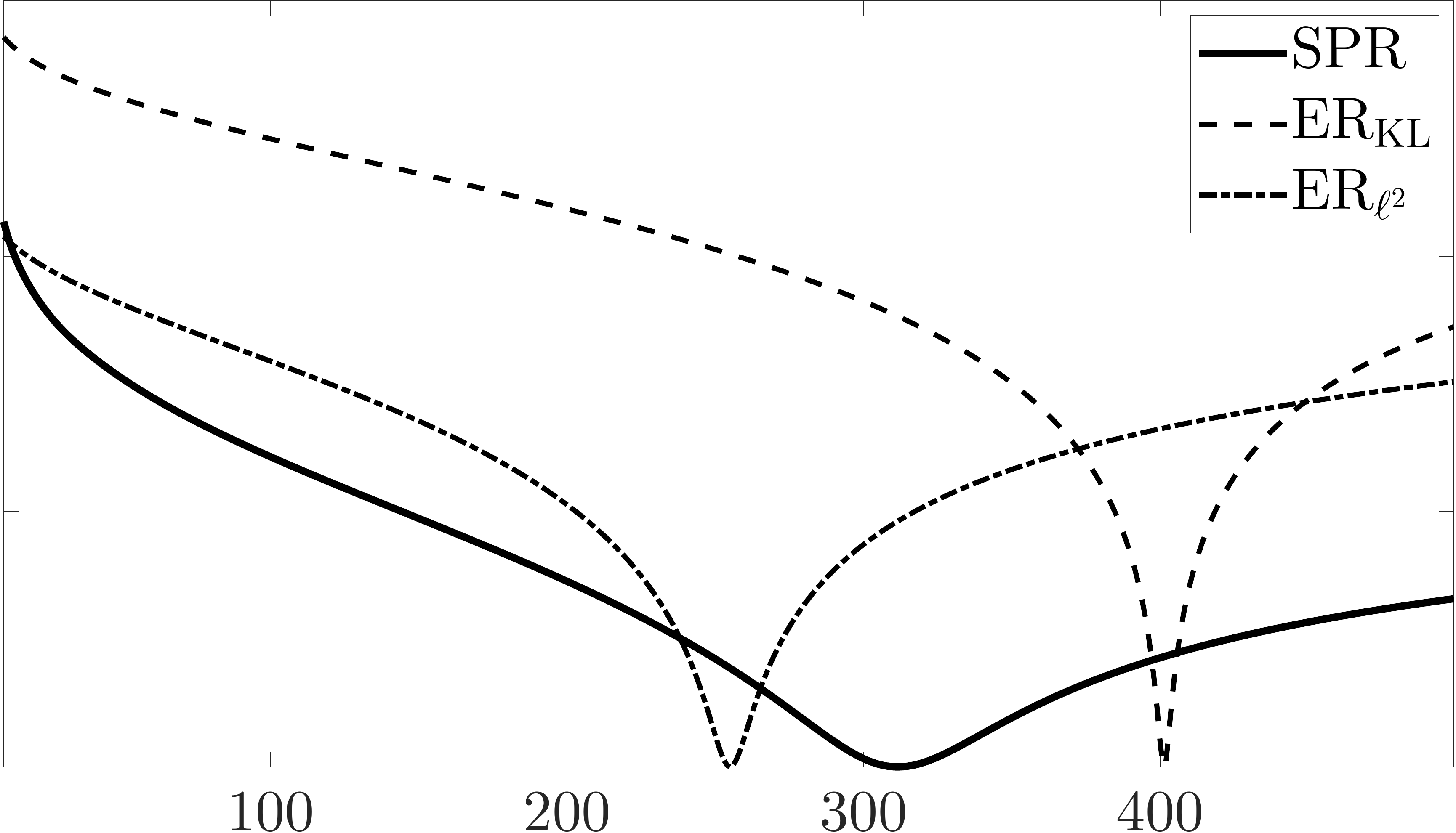}}

\caption{Results obtained on the image of nebula NGC7027 in the medium statistic case. Figure (a): comparison between the SPR and the mean values of PAUKL, REKL, PUKLA and PDP on $25$ Poisson noise realizations of the data.
Figure (b): zoom of figure (a). Figure (c): comparison between the SPR (solid line) and the mean value PAUKL (dashed line) on $25$ realizations of the data. The shaded area corresponds to the standard deviation of PAUKL. Figure (d): comparison between the SPR (solid line) and the empirical risks $\mathrm{ER}_{\mathrm{KL}}$ (dashed line) and $\mathrm{ER}_{\ell^2}$ (dot-dashed line). 
In each figure the number of iterations is reported on the x-axis. 
Figures (a)-(d) are plotted in logarithmic scale on the y-axis and figure (c) is plotted in logarithmic scale also on the x-axis.}
\label{fig:nebula risk medium stat}

\end{figure}

\subsection{Non-inverse crime setting}
In the inverse crime setting \cite{colton2019inverse} the operator used in the reconstruction process is exactly the same of the one used for generating the synthetic data.
Therefore, the tests we performed and showed in section \ref{inverse crime setting} are compliant with the hypotheses of the theorems and must reflect their theoretical properties.
However, in real world applications, the {\it exact} PSF of an acquisition instrument is not known (it does not necessarily exist) and therefore an {\it approximated} PSF is often used for the reconstruction process.
In this case the forward model used to generate the data is not exactly the same used during the inversion process and this framework is called non-inverse crime setting.
In order to study the behaviour of the stopping criteria presented in the previous section, in the non-inverse crime setting, inspired by the idea that a signal formation process is usually more complex than its mathematical formalization, first we simulate a more realistic image formation process by using an uneven PSF to generate the data and then we perform the inversion by using a smooth approximation of such a PSF.
Therefore, we consider to generate the exact PSF of a given image formation process by following the procedure proposed in \cite{benvenuto2010joint}: first we multiply by $10^4$ the PSF used in the tests of section \ref{inverse crime setting}, then we add Poisson noise and finally we normalize the result.
In this way we obtain an irregularly shaped PSF which we consider to be the {\it exact} PSF.
Then,  in the reconstruction process we make use of the smooth Gaussian shaped PSF described in section \ref{inverse crime setting} intended to be a smooth approximation of the exact PSF. 
We report in figure \ref{PSF comparison} a comparison between the exact and the approximated PSF.

\begin{figure}[ht]
\centering
\includegraphics[height=3.7cm]{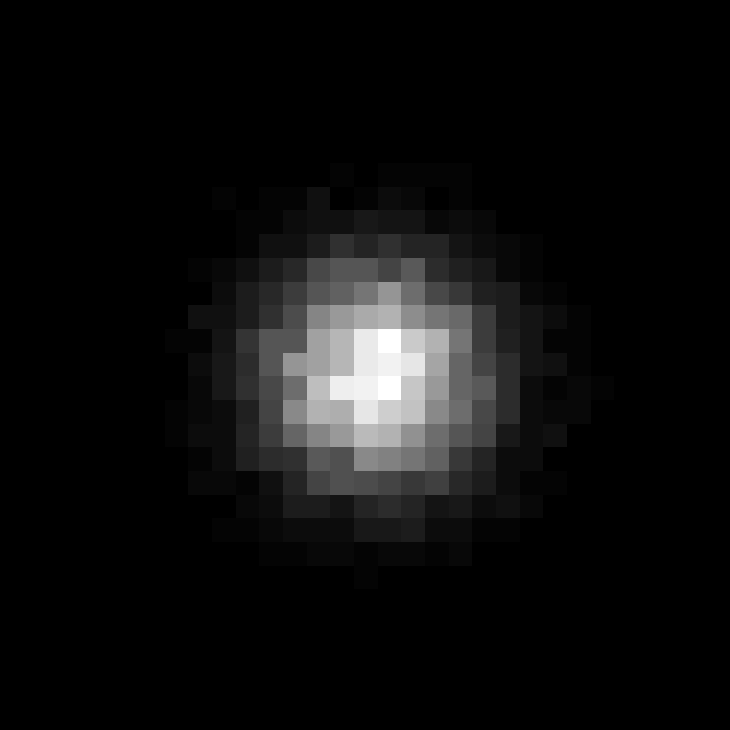}  
\includegraphics[height=3.7cm]{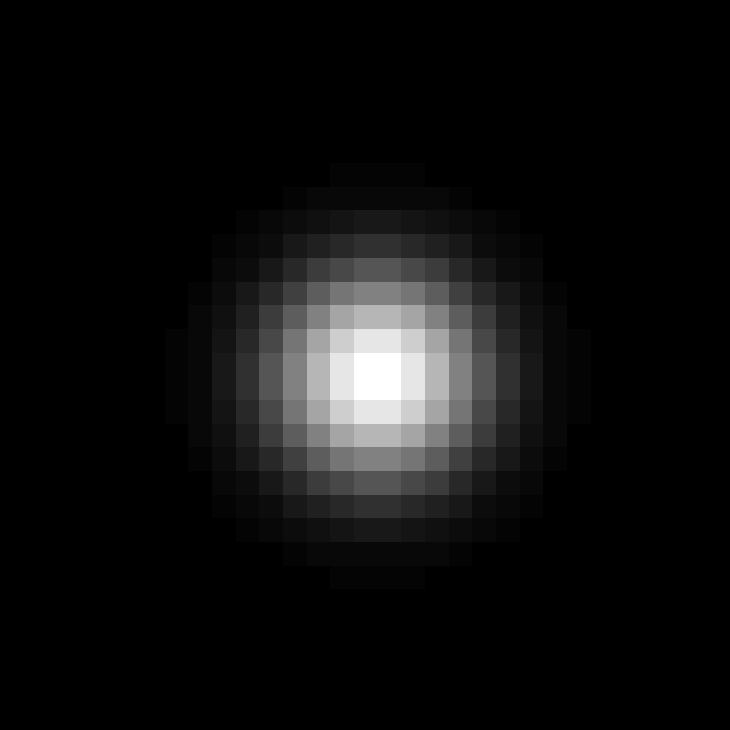}  
\includegraphics[height=3.7cm]{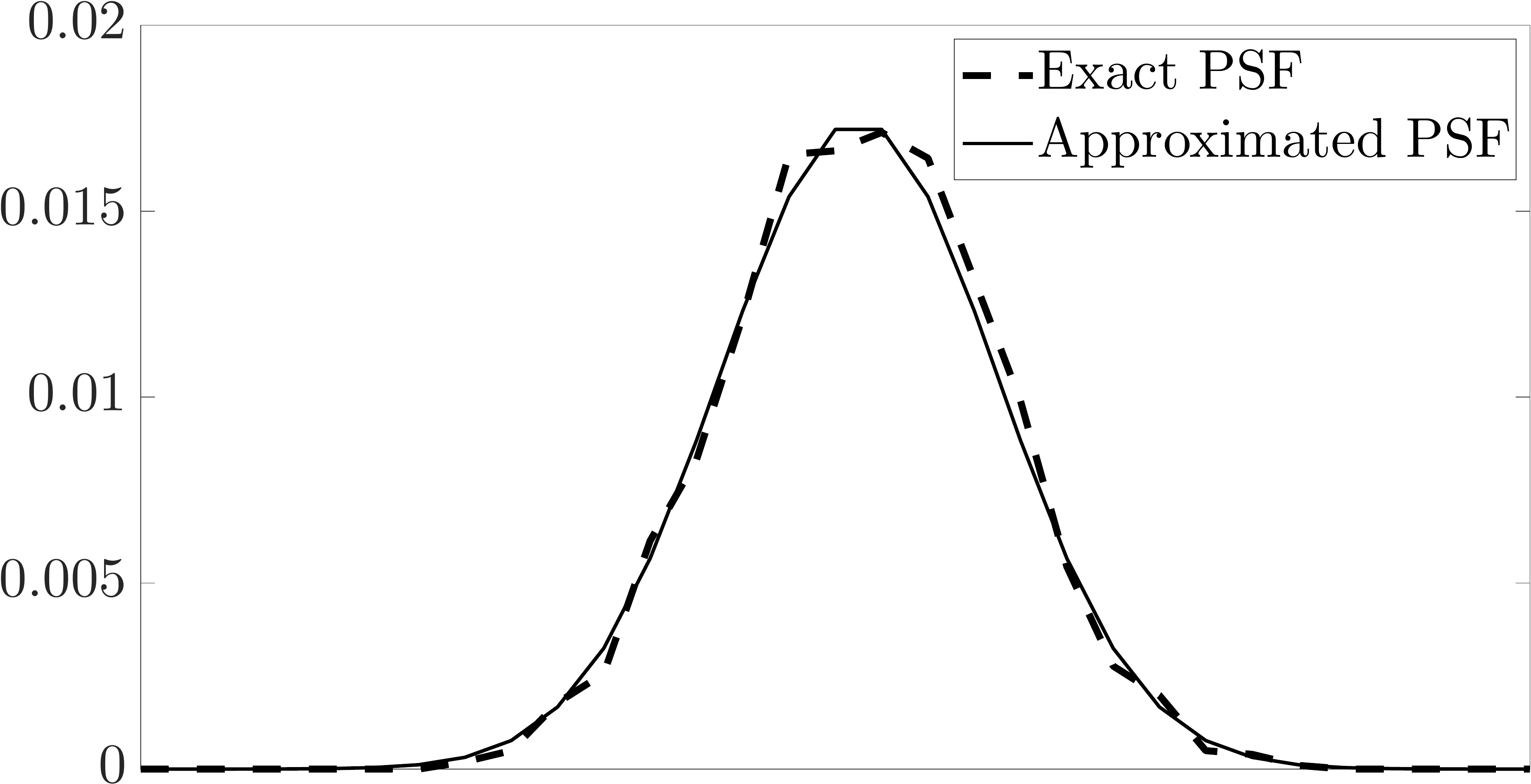} 
\caption{From left to right: the exact PSF, the approximated PSF and the profile of the exact (dashed line) and the approximated (solid line) PSF along an axis passing through the image center.}
\label{PSF comparison}
\end{figure}

It is well known that, in practical applications, the discrepancy principle in the Gaussian framework requires a tuning parameter to work \cite{engl1996regularization}. 
The main difficulty in real world problems is to determine a value for that parameters that makes the discrepancy principle sufficiently robust and reliable. 
We show now that, without a similar parameter, the discrepancy principle for Poisson data suffers from the same problem of the Gaussian case, i.e. sometimes it fails to work (as a stopping rule for the EM algorithm) in realistic simulation when the statistic of the data is high. 
Moreover, we show with some tests that the stopping rules based on the predictive risk seem to be more stable (at least in some cases and for some level of statistic). 
Finally, we give some intuitive explanations for these behaviours (under appropriate hypothesis), without the presumption of being exhaustive in the treatment, but with the only aim to highlight the necessity of a deeper investigation concerning errors in the forward model.

We perform reconstructions from data generated by applying the irregularly shaped PSF to the images of nebula NGC7027 and of the Horse Head nebula. 
Again, we consider three levels of statistic: the total flux is equal to $5 \times 10^8$, $10^9$ and $5 \times 10^9$ for the NGC7027 nebula and it is equal to $1 \times 10^{10}$, $2.5 \times 10^{10}$ and $5 \times 10^{10}$ for the Horse Head nebula in the low, medium and high statistic case respectively.
For both images, the same background of the tests of section \ref{inverse crime setting} is added.
This time we perform reconstructions for only one realization of Poisson noise in order to verify what can be the behaviour of the different stopping rules in a realistic application.
We compute at every iteration the quantities described in section \ref{inverse crime setting}, i.e. the PE, the estimators of the risk (PAUKL, PUKLA and REKL), the reconstruction errors $\mathrm{err}_{\ell^2}$ and $\mathrm{err}_{\mathrm{KL}}$, the discrepancy $d_{\mathrm{KL}}$ and the PDP. 
We report in the left panel of figure \ref{fig:non-ic d_kl and risk} a plot of the discrepancy $d_{\mathrm{KL}}$ for the three different levels of statistic together with a horizontal line at $M/2$ computed for the nebula NGC7027 image. 
In the right panel of figure \ref{fig:non-ic d_kl and risk} instead, we show the behaviour of the PE and of the proposed estimator in the medium statistic case for the nebula NGC7027. 
We also compare their optimal iterates with the ones of the reconstruction errors. 
Finally, in table \ref{tab:non-ic table iterations} we report the number of iterations at which the quantities described above are minimized for the images of both nebula NGC7027 and Horse Head nebula in the three levels of statistic.

\begin{figure}[ht]
\centering
\includegraphics[width=.48\textwidth]{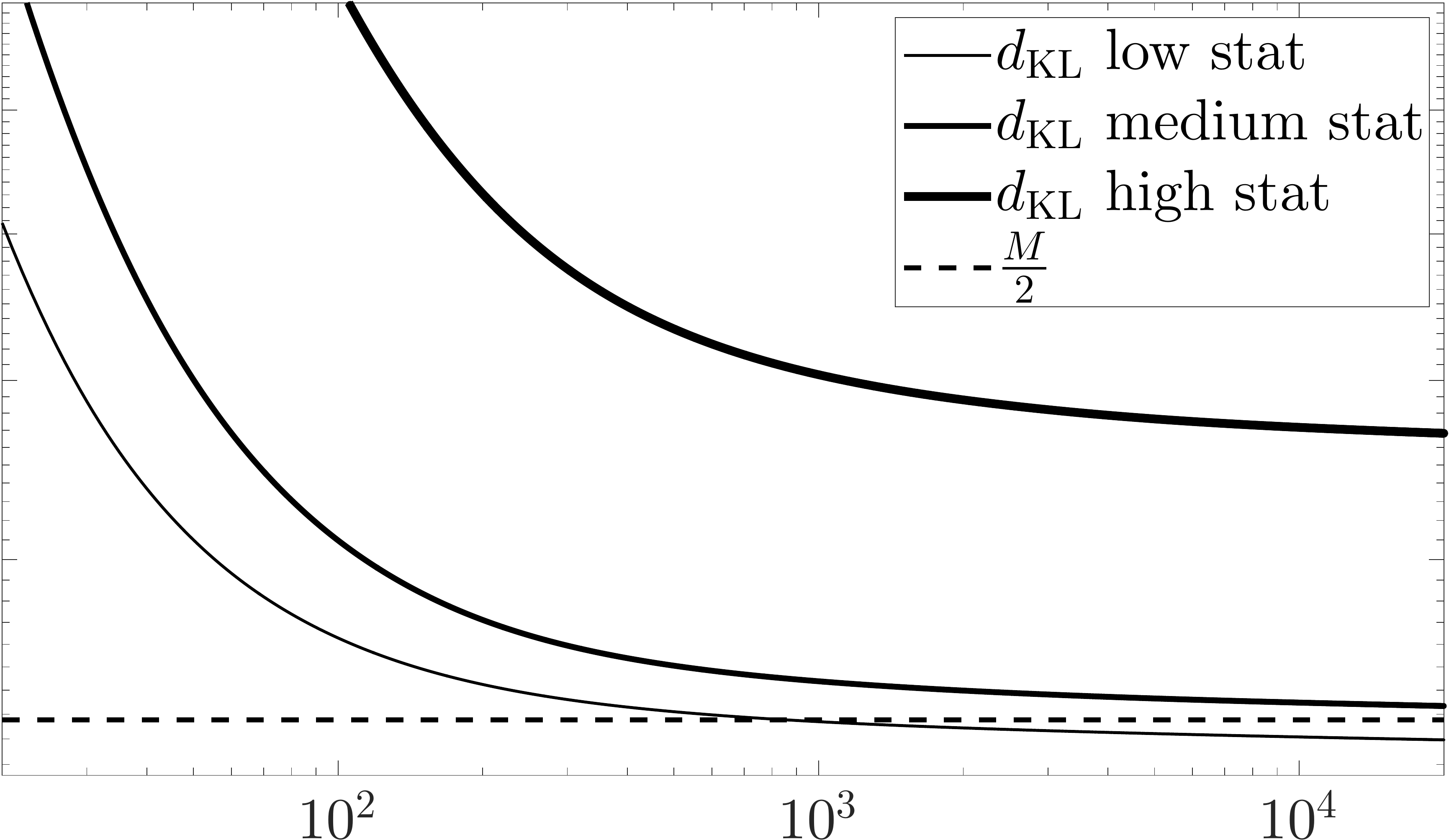}
\includegraphics[width=.48\textwidth]{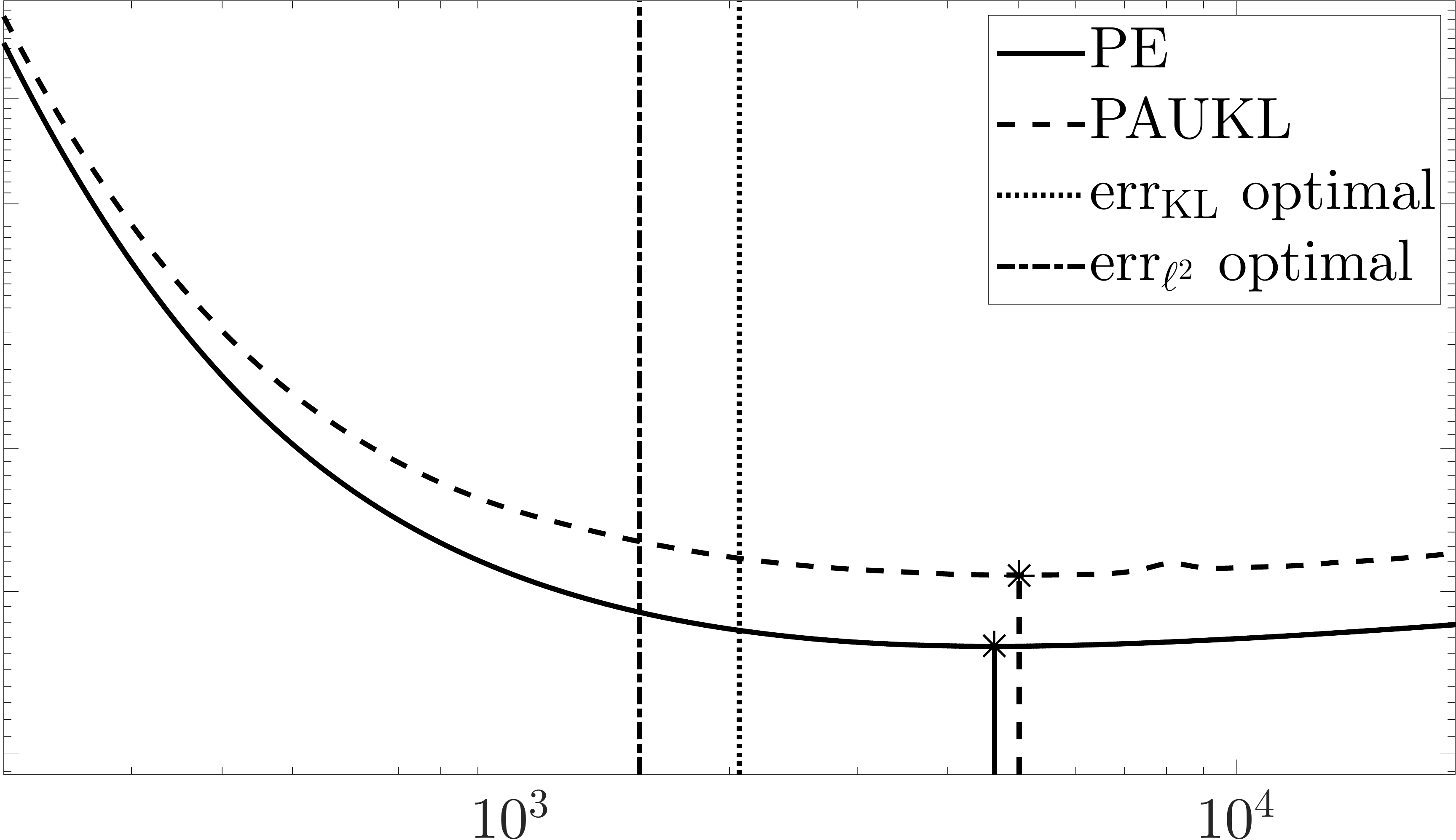}  

\caption{Left: discrepancy in KL divergence along the iterations for different levels of statistic for the nebula NGC7027. 
Solid lines represents the $d_{\mathrm{KL}}$ in the low, medium and high statistic case (low, medium and high thickness respectively) and dashed line is the asymptotic variance level.
Right: PE (solid line) and PAUKL (dashed line) computed for the nebula NGC7027 in the medium statistic case. We indicate with a vertical line the iterate at which $\mathrm{err}_{\mathrm{KL}}$ and $\mathrm{err}_{\mathrm{\ell^2}}$ are minimized (dot-dashed line and dotted line respectively). In each figure the number of iterations is reported on the x-axis. Both figures are plotted in logarithmic scale on the x and on the y-axis.}
\label{fig:non-ic d_kl and risk}
\end{figure}

\begin{table}[ht]
\centering
\begin{tabular}{cccccccc}
\toprule
Statistic        &PE   &PAUKL   &PUKLA  &REKL   &PDP     &$\mathrm{err}_{\mathrm{KL}}$    &$\mathrm{err}_{\ell^2}$ \\
\midrule
\multicolumn{8}{c}{\textbf{NGC7027}}\\
\cmidrule{2-8}
Low    &$1606$ &$1719$ &$1537$ &$1653$ &$857$ &$1224$ &$832$\\ 
Medium &$4639$ &$5014$ &$6012$ &$3985$ &$\ast$ &$2063$ &$1503$\\ 
High   &$\ast$ &$\ast$ &$\ast$ &$\ast$ &$\ast$ &$3026$ &$2869$\\ 
\midrule
\multicolumn{8}{c}{\textbf{Horse Head}}\\
\cmidrule{2-8}
Low    &$1874$ &$1749$ &$1817$ &$1718$ &$689$ &$1295$ &$1457$\\ 
Medium &$5647$ &$5530$ &$6246$ &$5072$ &$\ast$ &$2195$ &$2476$\\ 
High   &$\ast$ &$\ast$ &$\ast$ &$\ast$ &$\ast$ &$3056$ &$3468$\\  
\bottomrule
\end{tabular}
\caption{Comparison between the number of iterations that minimizes the PE, the estimators of the predictive risk (PAUKL, PUKLA and REKL), the PDP and the reconstruction errors ($\mathrm{err}_{\mathrm{KL}}$ and $\mathrm{err}_{\ell^2}$). We indicate with an asterisk if after $2\times 10^4$ iterations the minimum has not been reached.}
\label{tab:non-ic table iterations}
\end{table}

We observe that the Poisson discrepancy principle is satisfied just in the low statistic case for both images (see table \ref{tab:non-ic table iterations}). 
An explanation for this behaviour can be found in figure \ref{fig:non-ic d_kl and risk} left panel.
The line corresponding to the discrepancy gets more and more high with increasing statistic while the asymptotic variance does not depend on the statistic of the data. 
The error introduced by approximating an uneven PSF with an ideal one is reflected in an offset in the discrepancy that does not let it go below $M/2$ (neither for $k=+\infty$). 
In the next Lemma we show that such an offset depends on the specific data sample and on its geometric relation with respect to the image of the nonnegative orthant through the forward operator.
Throughout this work we denoted by
$H$ the matrix corresponding to the ideal PSF. 
Here we denote by
$\hat H$ the matrix corresponding to the exact PSF.
Given a matrix $C\in\mathbb{R}^{M \times N}$ we also define the cone
\begin{equation}
\mathcal{R}_+(C) \coloneqq \left\{Cx \,\colon\, x\in\mathbb{R}^N_+ \right\} ~,
\end{equation}
which contains all the possible values of a nonnegative vector through the matrix operator $C$. 
Hereafter, to simplify the computation and give a direct intuition of the next Lemma, we consider the EM algorithm with background $b=0$. The function $R_k$ defined in \eref{reg_alg} is then positively homogeneous of degree $1$, i.e. $R_k(Ly) = LR_k(y)$ for all $L>0$ and for all $y\in\mathbb{R}^M_+$ (see \cite{benvenuto2017study}).

\begin{lemma}\label{Lemma: discrpancy failure}
Let $y$ be a realization of $Y\sim \mathcal{P}(\hat Hx)$ such that $y\notin \mathcal{R}_+(H)$.
Then, if $\Vert y \Vert$ is sufficiently large,
for all $k\in\mathbb{N}$
\begin{equation}
\label{PDP}
D_{\mathrm{KL}}(y, HR_k(y)) > \frac{M}{2} ~ .
\end{equation}
\end{lemma}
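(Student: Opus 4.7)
The plan is to view $HR_k(y)$ as just one particular point in the closed cone $\mathcal{R}_+(H)$, replace it in the KL divergence by the infimum over the whole cone, and then use positive homogeneity to show that this infimum grows linearly in $\|y\|$ once the direction of $y$ is fixed outside the cone.

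First, since the EM iterates are nonnegative for all $k$ (they are obtained by multiplicative updates starting from $x_0 = 1$), we have $R_k(y) \in \mathbb{R}^N_+$ and hence $HR_k(y) \in \mathcal{R}_+(H)$. This gives the pointwise lower bound
\begin{equation*}
D_{\mathrm{KL}}(y, HR_k(y)) \;\geq\; d(y) \;:=\; \inf_{z\in \mathcal{R}_+(H)} D_{\mathrm{KL}}(y, z),
\end{equation*}
valid for every $k\in\mathbb{N}$ and independent of the iteration.

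Second, I would show $d(y) > 0$ whenever $y \notin \mathcal{R}_+(H)$. The cone $\mathcal{R}_+(H) = H(\mathbb{R}^N_+)$ is the image of a finitely generated (polyhedral) closed cone under a linear map, hence itself closed. If $z_n \in \mathcal{R}_+(H)$ were a minimizing sequence with $D_{\mathrm{KL}}(y,z_n) \to 0$, then, writing the KL divergence componentwise as $\sum_i [y_i\log(y_i/z_{n,i}) + z_{n,i} - y_i]$ (with the usual convention $0\log 0 = 0$), each summand is a nonnegative function of $z_{n,i}$ with unique zero at $z_{n,i} = y_i$, so $z_n \to y$. But then $y \in \mathcal{R}_+(H)$ by closedness, contradicting the hypothesis. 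Hence $d(y) > 0$.

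Third, I would exploit positive homogeneity. A direct computation gives $D_{\mathrm{KL}}(Lu, Lv) = L\, D_{\mathrm{KL}}(u,v)$ for every $L>0$, and $\mathcal{R}_+(H)$ is a cone, so $L\cdot \mathcal{R}_+(H) = \mathcal{R}_+(H)$; combining these, $d(Ly) = L\, d(y)$. Writing $y = \|y\|\, y_0$ with $y_0 = y/\|y\|$, which still satisfies $y_0 \notin \mathcal{R}_+(H)$ (the cone is invariant under positive scaling), we obtain $d(y) = \|y\|\, d(y_0)$. Thus
\begin{equation*}
D_{\mathrm{KL}}(y, HR_k(y)) \;\geq\; \|y\|\, d(y_0) \;>\; \frac{M}{2}
\end{equation*}
as soon as $\|y\| > M/(2\, d(y_0))$, which is the desired conclusion. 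The main subtle step is the positivity $d(y) > 0$: it relies on the closedness of $\mathcal{R}_+(H)$ (automatic for polyhedral cones, but not for general convex cones) together with the standard fact that KL-convergence to zero of the second argument forces convergence to the first; everything else is a direct consequence of homogeneity.
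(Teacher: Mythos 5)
Your argument is correct, but it takes a genuinely different route from the paper's. The paper fixes the direction $\bar y = y/\Vert y\Vert$ using the positive homogeneity of $R_k$ (which is why it sets $b=0$), then invokes two dynamical facts about EM --- that $D_{\mathrm{KL}}(\bar y, HR_k(\bar y))$ decreases monotonically in $k$ and that $HR_k(\bar y)$ converges to some $y^\ast\in\mathcal{R}_+(H)$ --- to bound every iterate from below by $D_{\mathrm{KL}}(\bar y, y^\ast)>0$. You instead discard all dynamical information about EM except the bare fact that its iterates are nonnegative, so that $HR_k(y)\in\mathcal{R}_+(H)$, and bound below by $d(y)=\inf_{z\in\mathcal{R}_+(H)}D_{\mathrm{KL}}(y,z)$; homogeneity is then used at the level of the KL divergence itself rather than at the level of $R_k$. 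What your approach buys is generality and economy: it applies verbatim to any reconstruction method with nonnegative iterates, needs neither the monotone-decrease property nor convergence of the EM sequence (nor the interchange of limit and $D_{\mathrm{KL}}$ that the paper performs implicitly), and yields the uniform-in-$k$ bound in one line. What it costs is that you must establish $d(y)>0$ yourself, whereas the paper gets positivity for free from the specific limit point $y^\ast$; in the end the two constants agree, since the EM limit is the KL projection of $\bar y$ onto $\mathcal{R}_+(H)$ and the infimum is attained there.

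One step to tighten: in your positivity argument you deduce $z_n\to y$ from the fact that each summand $\phi_i(t)=y_i\log(y_i/t)+t-y_i$ is nonnegative with unique zero at $t=y_i$. A unique zero alone does not force convergence of a minimizing sequence (a nonnegative function can have a unique zero and still vanish along a sequence escaping to infinity); you also need that $\phi_i$ is coercive, i.e.\ $\phi_i(t)\to+\infty$ both as $t\to 0^+$ (when $y_i>0$) and as $t\to+\infty$, which holds here because the linear term dominates the logarithm. With that remark added, together with the closedness of the finitely generated cone $\mathcal{R}_+(H)$ that you correctly invoke, the proof is complete.
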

\begin{proof}
From the positive homogeneity of $R_k$ it follows
\begin{equation}
\label{DKL_positive}
D_{\mathrm{KL}}(y, HR_k(y)) = \| y \| D_{\mathrm{KL}}\left(\bar{y}, HR_k\left(\bar{y}\right)\right) ~,
\end{equation}
where $\bar{y} \coloneqq y/\Vert y \Vert$. 
Now, thanks to the properties of the EM algorithm (see \cite{dempster1977maximum, shepp1982maximum}), for all $k\in\mathbb{N}$ we have
\begin{equation}
\fl D_{\mathrm{KL}}\left(\bar{y}, HR_k\left(\bar{y}\right)\right) \geq  \lim_{k\to +\infty} D_{\mathrm{KL}}\left(\bar{y}, HR_k\left(\bar{y}\right)\right) = D_{\mathrm{KL}}(\bar{y}, \lim_{k\to +\infty} HR_k\left(\bar{y}\right))  ~.
\end{equation}
As the sequence $\{ HR_k(\bar{y}) \}_{k\in\mathbb{N}}$ converges to $y^\ast \in \mathcal{R}_+(H)$ and as $\bar{y}\notin\mathcal{R}_+(H)$, then
\begin{equation}
D_{\mathrm{KL}}\left(\bar{y}, HR_k\left(\bar{y}\right)\right) \geq D_{\mathrm{KL}}(\bar{y},  y^\ast) >0 ~.
\end{equation}
Then, by taking 
\begin{equation}
\label{PDP-fail-condition}
\| y \|\geq M/2 (D_{\mathrm{KL}}(\bar{y},  y^\ast))^{-1} ~ 
\end{equation}
the thesis is straightforward.
\end{proof}

We point out that the conditions under which we prove Lemma \ref{Lemma: discrpancy failure} do not meet entirely the conditions under which we have developed the theory of this paper: this result is meant just to be a starting point for further investigations. 
In view of this, the thesis of Lemma \ref{Lemma: discrpancy failure} holds true even if $y$ is a realization of $Y \sim \mathcal{P}(Hx)$, i.e. in the inverse crime case, as the crucial hypothesis is that $y\notin\mathcal{R}_+(H)$.
However, for large scale problem the realization $y$ is typically not too far from $\mathcal{R}_+(H)$ in the sense that $D_{\mathrm{KL}}(\bar y,y^*)$ is small.
In this scenario, the norm $\|y\|$ must be so large to satisfy equation \eref{PDP-fail-condition} that this case is really unlikely to happen in practice.
Furthermore, an extremely large number of counts corresponds to a large signal-to-noise ratio and then regularization plays a minor role.
What does really matter in this setting is the systematic error in the forward modelling which may increase $D_{\mathrm{KL}}(\bar y,y^*)$ to such an extend that the Poisson discrepancy principle does not work.
An example of this is shown in the left panel of figure \ref{fig:non-ic d_kl and risk}.

On the other hand, the stopping rule based on our estimator selects the iterate $k$ that satisfies 
\begin{equation}
D_{\mathrm{KL}}(y, HR_k(y)) - D_{\mathrm{KL}}(y, HR_{k+1}(y))  < (y\nabla) \cdot \log \frac{HR_{k+1}( y )}
{HR_k( y )}
\end{equation}
and therefore does not suffer from the presence of an offset in the case in which $y\notin \mathcal{R}_+(H)$; this fact makes risk-based methods usually more robust with respect to the Poisson discrepancy principle. 
The stopping rule is always effective as long as the rate with which the KL divergence decreases along the iterations is definitely lower than the rate with which the term involving the trace increases. 
However, in the non-inverse crime setting there are problems due to the incorrect forward model used that make our stopping rule not work with high statistic level (see table \ref{tab:non-ic table iterations}). 
It is difficult to formalize the reasons of this misbehaviour as we did in Lemma \ref{Lemma: discrpancy failure} for the Poisson discrepancy principle. However, the crucial point is in the ill-posedness of the inverse problem since we notice that as long as the predictive risk estimator does not reach a minimum value along the iterations, also the PE, which is computed with the knowledge of $\lambda$, does not reach a minimum (see table \ref{tab:non-ic table iterations}, high statistic case).

\section{Conclusions}

As already remarked, in the case of the EM algorithm, the above mentioned predictive risk estimators do not differ each other, providing an alternative to the Poisson discrepancy principle, exactly as SURE does with respect to the Morozov's discrepancy principle. 
However, the proven theoretical results provide a more sound basis of the proposed estimator and allow identifying the analogy between the Gaussian and the Poisson case. 
Besides proving a more general Stein's Lemma for Poisson data, we also introduced an estimator that is asymptotically unbiased, approximating the exact value of the predictive risk, unlike PUKLA or REKL that include an unknown constant offset as bias term, while preserving the same computational efficiency.
Moreover, we give some simple conditions which has to be verified by a regularization method in order to make this estimator asymptotically unbiased.
Therefore, this estimator can be efficiently used in selecting the regularization parameter of any algorithm that satisfies such conditions.
The need of a suitable regularization is evident when the object to retrieve contains structures like edges or finer details. 
The analysis of other regularization methods may be the next direction of this work.
Finally, as suggested by the numerical experiments reported in this paper, a theoretical investigation of the non-inverse crime case, also presented in this paper, is a crucial point for understanding the range of applicability of the proposed method.

\ack
The authors thank the National Group of Scientific Computing (GNCS-INDAM) that supported this research.

\appendix
\section*{Appendix}
\setcounter{section}{1}
In the following we will denote $\pi_\lambda$ the probability mass function of an $M $-dimensional Poisson random variable with independent components and mean value $\lambda\in\mathbb{R}^M_{++}$, i.e.
\begin{equation}
\pi_\lambda\left(n\right) \coloneqq \prod_{i=1}^M \frac{e^{-\lambda_i} \lambda_i^{n_i}}{n_i !} \quad\forall{n\in\mathbb{N}^M}~.
\end{equation}
Moreover, if $I\subseteq \{1, \dots, M \}$ is a subset of indices, we will pose
\begin{equation}
\Vert y \Vert_I \coloneqq \sqrt{\sum_{i \in I} y_i^2}
\end{equation}
for all $y\in\mathbb{R}^M$.
Finally, the inequalities between vectors and the integer part $\lfloor \cdot \rfloor$ applied to vectors are meant component-wise.

The result proved in the next Lemma follows what done in \cite{zanella2013corrigendum}.
\begin{lemma}\label{Lemma: exp}
There exist $\eta>0$ and $0 < \theta < e$ such that, for all $x > 0$, we have
\begin{equation}
g(x)\coloneqq \frac{x^{\left\lfloor \frac{x}{2} \right\rfloor}}{\left\lfloor \frac{x}{2} \right\rfloor !} \leq \eta \theta^{\frac{x}{2}} e^{\frac{x}{2}}
\end{equation}
\end{lemma}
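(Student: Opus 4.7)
The plan is to parametrize $x > 0$ by the pair $(n,r)$ with $n = \lfloor x/2 \rfloor \in \mathbb{N}$ and $r = x - 2n \in [0,2)$, bound $n!$ from below by Stirling, and then select $\theta$ in the interval $(2,e)$ so that a geometric factor decays.

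First I would dispense with the trivial case $n=0$, i.e.\ $0 < x < 2$. In that range $g(x) = 1$, while $\theta^{x/2} e^{x/2} = (\theta e)^{x/2} \geq 1$ as soon as $\theta e \geq 1$. So any $\eta \geq 1$ works here, regardless of the specific $\theta$ chosen below. For $n \geq 1$ I would invoke the standard lower bound $n! \geq \sqrt{2\pi n}\,(n/e)^n$ to obtain
\begin{equation}
g(x) = \frac{x^n}{n!} \leq \frac{(x/n)^n e^n}{\sqrt{2\pi n}}.
\end{equation}
Since $x/n = 2 + r/n$, the elementary inequality $(1+t)^n \leq e^{nt}$ applied to $t = r/(2n)$ yields $(x/n)^n = 2^n(1 + r/(2n))^n \leq 2^n e^{r/2}$, hence
\begin{equation}
g(x) \leq \frac{(2e)^n\, e^{r/2}}{\sqrt{2\pi n}}.
\end{equation}

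On the other hand, $\eta\,\theta^{x/2} e^{x/2} = \eta\,(\theta e)^{n+r/2}$, so the desired bound is equivalent to
\begin{equation}
\left(\frac{2}{\theta}\right)^n \leq \eta\, \sqrt{2\pi n}\,\theta^{r/2}.
\end{equation}
The key choice is $\theta \in (2, e)$, for instance $\theta = 5/2$. Then $2/\theta < 1$, so $(2/\theta)^n$ decays geometrically, while $\theta^{r/2} \geq 1$ (as $\theta > 1$ and $r \geq 0$) and $\sqrt{2\pi n} \geq \sqrt{2\pi}$. Consequently
\begin{equation}
\eta := \max\!\left\{1,\; \sup_{n \geq 1} \frac{(2/\theta)^n}{\sqrt{2\pi n}}\right\}
\end{equation}
is a finite positive constant, and it satisfies the claim uniformly in $x > 0$.

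There is no serious obstacle in this argument; the only point requiring care is that the factor $2e$ emerging from the Stirling step forces us to split it as $(\theta e)\cdot(2/\theta)$, and we must have $2/\theta < 1$ together with $\theta < e$. This is exactly why the interval $(2,e)$ is nonempty (and, satisfyingly, neither endpoint is admissible: $\theta = 2$ makes the factor fail to decay, while $\theta = e$ is excluded by the statement). Small-$x$ behaviour is handled by the trivial case above, and for every $n \geq 1$ the Stirling bound $n! \geq \sqrt{2\pi n}(n/e)^n$ holds without correction, so no separate treatment of moderate $n$ is needed.
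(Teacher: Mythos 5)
Your proof is correct and follows essentially the same route as the paper's: a Stirling lower bound on $\left\lfloor x/2\right\rfloor!$ followed by absorbing the resulting $2^n e^n$ into $(\theta e)^{x/2}$ with $\theta$ chosen strictly between $2$ and $e$. The only difference is cosmetic: by writing $x=2n+r$ and using $(1+r/(2n))^n\le e^{r/2}$ you get a single uniform bound for all $n\ge 1$, whereas the paper splits $x>2$ into $(2,6]$ and $(6,+\infty)$, bounding $(x/\lfloor x/2\rfloor)^{\lfloor x/2\rfloor}$ by the crude constant $216$ on the first range and by $(8/3)^{x/2}$ on the second, which is why it ends up with $\theta=8/3$ and $\eta=216/\sqrt{2\pi}$ instead of your smaller constants.
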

\begin{proof}
We consider three cases.
\begin{enumerate}[label= \arabic*)]

\item If $0< x \leq 2$, then $g(x)\leq 2$.

\item If $2< x \leq 6$, by using the Stirling lower bound of the factorial, we obtain
\begin{equation}
\left\lfloor \frac{x}{2} \right\rfloor ! \geq \sqrt{2 \pi} \left\lfloor \frac{x}{2} \right\rfloor^{\left\lfloor \frac{x}{2} \right\rfloor + \frac{1}{2}} e^{-\left\lfloor \frac{x}{2} \right\rfloor} \geq \sqrt{2 \pi} \left\lfloor \frac{x}{2} \right\rfloor^{\left\lfloor \frac{x}{2} \right\rfloor} e^{-\frac{x}{2}} ~,
\end{equation}
therefore
\begin{equation}\label{bound g funct}
g(x) \leq \frac{1}{\sqrt{2\pi}} e^{\frac{x}{2}} \left( \frac{x}{\left\lfloor \frac{x}{2} \right\rfloor} \right)^{\left\lfloor \frac{x}{2} \right\rfloor}  ~.
\end{equation}
Moreover, as
\begin{equation}
\left( \frac{x}{\left\lfloor \frac{x}{2} \right\rfloor} \right)^{\left\lfloor \frac{x}{2} \right\rfloor} = e^{\left\lfloor \frac{x}{2} \right\rfloor \log x } e^{-\left\lfloor \frac{x}{2} \right\rfloor \log\left\lfloor \frac{x}{2} \right\rfloor} \leq e^{3 \log 6} = 216 ~,
\end{equation}
we obtain
\begin{equation}
g(x) \leq \frac{216}{\sqrt{2 \pi}} e^{\frac{x}{2}} ~.
\end{equation}

\item If $x > 6$, using again \eref{bound g funct} and observing that
\begin{equation}
2 \leq\frac{x}{\left\lfloor \frac{x}{2} \right\rfloor} \leq \frac{8}{3} \simeq 2.67 < e
\end{equation}
leads to 
\begin{equation}
g(x) \leq \frac{1}{\sqrt{2\pi}} e^{\frac{x}{2}} \left( \frac{8}{3}\right)^{\frac{x}{2}} ~.
\end{equation} 
\end{enumerate}
We conclude by posing $\eta = \frac{216}{\sqrt{2\pi}}$ and $\theta = \frac{8}{3}$.
\end{proof}

\begin{lemma}\label{Lemma: O grande exp}
There exist $c>0$ such that
\begin{equation}
\sum_{n \leq \left\lfloor \frac{\lambda}{2}\right\rfloor} \pi_\lambda(n) = O \left( \Vert \lambda \Vert^M e^{- c \Vert \lambda \Vert} \right) 
\end{equation}
as $\Vert \lambda \Vert \to +\infty$ in $\mathbb{R}^M_{++}$.
\end{lemma}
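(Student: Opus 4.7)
The plan is to exploit the independence of the components of $Y$ so that the multivariate tail sum factorises, and then apply Lemma A.1 of the appendix to each one-dimensional Poisson tail separately.

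First I would write
\begin{equation*}
\sum_{n \leq \lfloor \lambda/2\rfloor} \pi_\lambda(n) \;=\; \prod_{i=1}^M \sum_{k=0}^{\lfloor \lambda_i/2 \rfloor} \frac{e^{-\lambda_i}\lambda_i^k}{k!} \;=\; \prod_{i=1}^M \mathbb{P}\!\left(Y_i \leq \lfloor \lambda_i/2 \rfloor\right) ,
\end{equation*}
where each $Y_i\sim\mathcal{P}(\lambda_i)$. Since the Poisson probability mass function $k\mapsto e^{-\lambda_i}\lambda_i^k/k!$ is increasing for $k\leq\lambda_i$, the largest term in each inner sum is the last one, and the number of terms is at most $\lambda_i/2 + 1$. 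This gives the crude but sufficient bound
\begin{equation*}
\mathbb{P}\!\left(Y_i \leq \lfloor \lambda_i/2 \rfloor\right) \;\leq\; \left(\tfrac{\lambda_i}{2}+1\right) e^{-\lambda_i}\, \frac{\lambda_i^{\lfloor \lambda_i/2 \rfloor}}{\lfloor \lambda_i/2 \rfloor !} .
\end{equation*}

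Next I would invoke Lemma A.1 with $x=\lambda_i$ to control the ratio $\lambda_i^{\lfloor\lambda_i/2\rfloor}/\lfloor\lambda_i/2\rfloor!$ by $\eta\,\theta^{\lambda_i/2}e^{\lambda_i/2}$ with $\theta<e$. Setting $\alpha\coloneqq \theta/e\in(0,1)$ and $c_0\coloneqq -\tfrac{1}{2}\log\alpha>0$, this yields
\begin{equation*}
\mathbb{P}\!\left(Y_i \leq \lfloor \lambda_i/2 \rfloor\right) \;\leq\; \eta\left(\tfrac{\lambda_i}{2}+1\right) e^{-c_0 \lambda_i} .
\end{equation*}
The key point here is precisely that $\theta<e$ in Lemma A.1, which is what turns the apparently cancelling factors $\theta^{\lambda_i/2}e^{\lambda_i/2}$ and $e^{-\lambda_i}$ into genuine exponential decay.

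Finally, taking the product over $i=1,\dots,M$ gives
\begin{equation*}
\sum_{n \leq \lfloor \lambda/2\rfloor} \pi_\lambda(n) \;\leq\; \eta^M \prod_{i=1}^M\!\left(\tfrac{\lambda_i}{2}+1\right)\, \exp\!\Bigl(-c_0 \sum_{i=1}^M \lambda_i\Bigr) .
\end{equation*}
Since each factor satisfies $\lambda_i/2+1 = O(\|\lambda\|)$ as $\|\lambda\|\to\infty$, the product is $O(\|\lambda\|^M)$. For the exponential, the equivalence of norms on $\mathbb{R}^M$ (together with $\lambda_i>0$) gives $\sum_i\lambda_i=\|\lambda\|_1\geq\|\lambda\|$, so $\exp(-c_0\sum_i\lambda_i)\leq \exp(-c\|\lambda\|)$ for any $0<c\leq c_0$. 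Combining these two estimates yields the claim. I do not expect any serious obstacle: the only delicate point is keeping track of the constants in Lemma A.1 to make sure the exponential factor beats the polynomial one, which is automatic from $\theta<e$.
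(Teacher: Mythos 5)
Your proposal is correct and follows essentially the same route as the paper's own proof: factorise the tail sum by independence, bound each one-dimensional sum by the number of terms times its largest summand (the monotonicity $\lambda_i^r/r! < \lambda_i^s/s!$ for $r<s\leq\lfloor\lambda_i/2\rfloor$), invoke Lemma A.1 with $\theta<e$ to extract the exponential decay, and finish with $\lambda_i\leq\Vert\lambda\Vert$ and $\Vert\lambda\Vert_1\geq\Vert\lambda\Vert$. Even the final constant $c_0=\tfrac{1}{2}\log(e/\theta)$ coincides with the paper's $c=(\log a)/2$ for $a=e/\theta$.
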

\begin{proof}
By definition we have
\begin{equation}
\sum_{n \leq \left\lfloor \frac{\lambda}{2}\right\rfloor} \pi_\lambda(n) = \prod_{i=1}^M  \sum_{n_i = 1}^{\left\lfloor \frac{\lambda_i}{2}\right\rfloor} \frac{e^{-\lambda_i} \lambda_i^{n_i}}{n_i !}  ~.
\end{equation}
Following what done in \cite{zanella2013corrigendum}, by using the fact that $\lambda_i^r/r! < \lambda_i^s/s!$ when $r < s \leq \left\lfloor \frac{\lambda_i}{2} \right\rfloor$, we obtain
\begin{equation}
\sum_{n_i = 0}^{\left\lfloor \frac{\lambda_i}{2} \right\rfloor} \frac{e^{-\lambda_i} \lambda_i^{n_i}}{{n_i}!} \leq \left( \left\lfloor \frac{\lambda_i}{2} \right\rfloor + 1 \right) e^{-\lambda_i} \frac{\lambda_i^{\left\lfloor \frac{\lambda_i}{2} \right\rfloor}}{\left\lfloor \frac{\lambda_i}{2} \right\rfloor !} ~.
\end{equation}
Now, thanks to Lemma \ref{Lemma: exp}, there exist $\eta>0$ and $0 < \theta <e$ such that
\begin{equation}
\frac{\lambda_i^{\left\lfloor \frac{\lambda_i}{2} \right\rfloor}}{\left\lfloor \frac{\lambda_i}{2} \right\rfloor !} \leq \eta \theta^{\frac{\lambda_i}{2}} e^{\frac{\lambda_i}{2}} ~,
\end{equation}
therefore
\begin{equation}
\sum_{n_i=0}^{\left\lfloor \frac{\lambda_i}{2} \right\rfloor} \frac{e^{-\lambda_i} \lambda_i^{n_i}}{n_i!} \leq \eta \left( \left\lfloor \frac{\lambda_i}{2} \right\rfloor + 1 \right) e^{-\lambda_i} \theta^{\frac{\lambda_i}{2}} e^{\frac{\lambda_i}{2}} \leq \eta \left(\frac{\lambda_i}{2} + 1 \right)\left(\frac{e}{\theta}\right)^{-\frac{\lambda_i}{2}} ~. 
\end{equation}
If we denote $a \coloneqq e/\theta > 1$ and if we use the fact that $\lambda_i \leq \Vert \lambda \Vert$ in $\mathbb{R}^M_{++}$, we have
\begin{equation}
\prod_{i=1}^M  \sum_{n_i = 1}^{\left\lfloor \frac{\lambda_i}{2}\right\rfloor} \frac{e^{-\lambda_i} \lambda_i^{n_i}}{n_i !}  \leq \eta^M \left( \frac{\Vert \lambda \Vert}{2}  + 1 \right)^M a^{-\frac{\Vert \lambda \Vert_1}{2}} ~.
\end{equation}
Hence, as $\Vert \lambda \Vert_1 \geq \Vert \lambda \Vert$, we obtain
\begin{equation}
\fl \sum_{n \leq \left\lfloor \frac{\lambda}{2}\right\rfloor} \pi_\lambda(n) \leq \eta^M \left( \frac{\Vert \lambda \Vert}{2}  + 1 \right)^M a^{-\frac{\Vert \lambda \Vert}{2}} = \eta^M \left( \frac{\Vert \lambda \Vert}{2}  + 1 \right)^M e^{-\frac{\Vert \lambda \Vert}{2} \log a} ~.
\end{equation}
The thesis is straightforward if we pose $c \coloneqq (\log a)/2 >0$.
\end{proof}

\begin{lemma}\label{lemma equiv norme}
Let $I\subseteq \{1, \dots, M\}$ be a nonempty subset of indices and let $\mathcal{C}\subset\mathbb{R}^M_{++}$ be a closed convex cone. Then, there exist $c_1$, $c_2>0$ such that $c_1 \Vert y \Vert \leq \Vert y \Vert_I \leq c_2 \Vert y \Vert$ for all $y\in\mathcal{C}$.
\end{lemma}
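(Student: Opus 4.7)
\begin{proof}
The upper bound is immediate: for any $y \in \mathbb{R}^M$,
\begin{equation}
\Vert y \Vert_I^2 = \sum_{i \in I} y_i^2 \leq \sum_{i=1}^M y_i^2 = \Vert y \Vert^2 ~,
\end{equation}
so the inequality holds with $c_2 = 1$.

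For the lower bound, the plan is to exploit homogeneity and reduce to a compactness argument on the cross-section of the cone with the unit sphere. Let $K \coloneqq \mathcal{C} \cap S^{M-1}$, where $S^{M-1}$ denotes the Euclidean unit sphere in $\mathbb{R}^M$. Since $\mathcal{C}$ is closed and $S^{M-1}$ is compact, $K$ is compact. Consider the continuous function $\phi \colon K \to \mathbb{R}$ defined by $\phi(y) \coloneqq \Vert y \Vert_I$. By the Weierstrass theorem, $\phi$ attains its infimum on $K$ at some point $y^\ast \in K$.

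The key point is that $\phi(y^\ast) > 0$. Indeed, since $\mathcal{C} \setminus \{0\} \subset \mathbb{R}^M_{++}$ and $y^\ast \neq 0$ (being in $S^{M-1}$), every component of $y^\ast$ is strictly positive; in particular $y^\ast_i > 0$ for every $i \in I$, so $\phi(y^\ast) = \Vert y^\ast \Vert_I > 0$. Set $c_1 \coloneqq \phi(y^\ast) > 0$.

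Finally, extend by positive homogeneity. Take any $y \in \mathcal{C}$. If $y = 0$ both sides vanish, so assume $y \neq 0$ and write $\tilde y \coloneqq y / \Vert y \Vert \in K$ (using that $\mathcal{C}$ is a cone). Then
\begin{equation}
\Vert y \Vert_I = \Vert y \Vert \, \Vert \tilde y \Vert_I = \Vert y \Vert \, \phi(\tilde y) \geq c_1 \Vert y \Vert ~,
\end{equation}
which concludes the proof.
\end{proof}

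The only subtle step is the strict positivity $\phi(y^\ast) > 0$, which ultimately relies on the hypothesis $\mathcal{C} \subset \mathbb{R}^M_{++}$: without it, the cross-section $K$ could touch a coordinate hyperplane indexed by some $i \in I$, and the infimum could be zero. Everything else is a standard compactness-plus-homogeneity argument.
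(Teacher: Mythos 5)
Your proof is correct and takes essentially the same route as the paper's: a Weierstrass argument on the compact cross-section $\mathcal{C}\cap S^{M-1}$, strict positivity of the minimum coming from $\mathcal{C}\subset\mathbb{R}^M_{++}$ together with $I\neq\emptyset$, and then extension to all of $\mathcal{C}$ by positive homogeneity. The only cosmetic difference is that the paper minimizes the degree-zero homogeneous ratio $\Vert y\Vert_I/\Vert y\Vert$ directly, while you restrict $\Vert\cdot\Vert_I$ to the sphere and rescale, which is the same argument.
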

\begin{proof}
We can trivially take $c_2 = 1$ as $\Vert y \Vert_I \leq \Vert y \Vert$. Now, let us pose
\begin{equation}
g(y) \coloneqq \frac{\Vert y \Vert_I}{\Vert y \Vert} ~.
\end{equation}
We want to prove that there exists $c_1>0$ such that $g(y)\geq c_1$ for all $y\in\mathcal{C}\setminus\{0\}$. We observe that $g$ is continuous on $\mathbb{R}^M_+\setminus \{ 0 \}$ and that it is positively homogeneous of degree 0, i.e. $g(Ly) = g(y)$ for all $y\in\mathbb{R}_+^M\setminus\{0\}$ and for all $L>0$. We point out that $\mathcal{C}\cap \mathbb{S}^{M-1}$ (where $\mathbb{S}^{M-1}$ denotes the surface of the unit sphere in $\mathbb{R}^M$) is a compact set, therefore, for Weierstrass Theorem, $g$ attains a minimum value $c_1$ on it. Moreover, as $\mathcal{C}\cap \mathbb{S}^{M-1}$ it holds $c_1>0$. If we consider $y\in\mathcal{C}\setminus \{0\}$, then, thanks to the positive homogeneity of degree 0 of $g$ and thanks to the fact that $y/\Vert y\Vert \in \mathcal{C}\cap \mathbb{S}^{M-1}$, we have
\begin{equation}
g(y) = g \left( \Vert y\Vert \frac{y}{\Vert y\Vert} \right) = g \left( \frac{y}{\Vert y\Vert} \right)\geq c_1   ~. 
\end{equation}

\end{proof}

\begin{proof}[Proof of Lemma \ref{Stein's lemma (Poisson)}]
By adding and subtracting $\lambda_i \partial_i f(\lambda)$, we have
\begin{eqnarray}
\mathbb{E}\left( \left( Y_i - \lambda_i \right) f\left( Y \right) - Y_i \partial_i f(Y) \right) = & \mathbb{E} \left( \left( Y_i - \lambda_i \right) f\left( Y \right) - \lambda_i \partial_i f(\lambda) \right) + \\ & \mathbb{E}\left( \lambda_i \partial_i f(\lambda) - Y_i \partial_i f(Y) \right)~. \nonumber
\end{eqnarray}
Hence, we prove
\begin{equation}\label{eq O grande 1}
\mathbb{E} \left( \left( Y_i - \lambda_i \right) f\left( Y \right) - \lambda_i \partial_i f(\lambda) \right) = O \left(\Vert \lambda \Vert^{-1/2} \right) 
\end{equation}
and
\begin{equation}\label{eq O grande 2}
\mathbb{E}\left( \lambda_i \partial_i f(\lambda) - Y_i \partial_i f(Y) \right) = O \left(\Vert \lambda \Vert^{-1/2} \right)
\end{equation}
as $\Vert \lambda \Vert\to +\infty$ in $\mathcal{C}$  (actually, we will give only the proof of \eref{eq O grande 1} as the one of \eref{eq O grande 2} is analogous). If we apply Taylor formula with Lagrange remainder to $f$ we have 
\begin{equation}
\fl f(Y) = f(\lambda) + \sum_{j=1}^M \partial_j f(\lambda)(Y_j - \lambda_j) + \frac{1}{2} \sum_{k, l=1}^M  \partial_k\partial_l f(\xi_{\lambda, Y})(Y_k - \lambda_k)(Y_l - \lambda_l) ~,
\end{equation}
where $\xi_{\lambda, Y} = \lambda + t(Y - \lambda)$ for some $t\in (0,1)$. Therefore,
\begin{equation}
\eqalign{&\mathbb{E} \left(\left( Y_i - \lambda_i \right) f\left( Y \right) - \lambda_i \partial_i f(\lambda)\right) = 
\\ & f(\lambda)\mathbb{E} \left( Y_i - \lambda_i \right) + \left[\sum_{j=1}^M \partial_j f(\lambda) \mathbb{E} \left(\left( Y_i - \lambda_i \right)\left( Y_j - \lambda_j \right)\right) - \lambda_i \partial_i f(\lambda)\right] + \\
&\frac{1}{2} \sum_{k,l=1}^M \mathbb{E} \left(\partial_k\partial_l f(\xi_{\lambda, Y}) \left( Y_i - \lambda_i \right) \left(Y_k - \lambda_k\right) \left(Y_l - \lambda_l\right) \right) ~.}
\end{equation}
Now, as $\mathbb{E} \left( Y_i - \lambda_i \right) = 0$ and $\mathbb{E} \left(\left( Y_i - \lambda_i \right)\left( Y_j - \lambda_j \right)\right) = \delta_{ij} \lambda_i$, where $\delta_{ij}$ denotes the Kronecker symbol, we obtain
\begin{equation}
\eqalign{&\mathbb{E} \left(\left( Y_i - \lambda_i \right) f\left( Y \right) - \lambda_i \partial_i f(\lambda)\right) = \cr & \frac{1}{2} \sum_{k,l=1}^M \mathbb{E} \left(\partial_k\partial_l f(\xi_{\lambda, Y}) \left( Y_i - \lambda_i \right) \left(Y_k - \lambda_k\right) \left(Y_l - \lambda_l\right) \right) .}
\end{equation}
Let us fix $k$, $l\in\{1, \dots, M \}$. We want to prove that
\begin{equation}\label{Lemma: tesi 1}
\mathbb{E} \left(\partial_k\partial_l f(\xi_{\lambda, Y}) \left( Y_i - \lambda_i \right) \left(Y_k - \lambda_k\right) \left(Y_l - \lambda_l\right) \right)^2 = O (\Vert \lambda \Vert^{-1})
\end{equation}
as $\Vert \lambda \Vert \to +\infty$ in $\mathcal{C}$. By applying Jensen's inequality and hypothesis \ref{Stein's Lemma hp iv} we obtain
\begin{equation}
\eqalign{&\left(\mathbb{E} \left(\partial_k\partial_l f(\xi_{\lambda, Y}) \left( Y_i - \lambda_i \right) \left(Y_k - \lambda_k\right) \left(Y_l - \lambda_l\right) \right) \right)^2 \leq \\
&\mathbb{E} \left(\left(\partial_k\partial_l f(\xi_{\lambda, Y})\right)^2 \left( Y_i - \lambda_i \right)^2 \left(Y_k - \lambda_k\right)^2 \left(Y_l - \lambda_l\right)^2 \right) \leq \\
&\mathbb{E} \left(\frac{c_{k,l}^2}{(\Vert \xi_{\lambda, Y} \Vert^2 + \varepsilon_{k,l})^2 } \left( Y_i - \lambda_i \right)^2 \left(Y_k - \lambda_k\right)^2 \left(Y_l - \lambda_l\right)^2 \right) = \\
&c_{k,l}^2 \sum_{n \in \mathbb{N}^M} \frac{1}{\left(\Vert \xi_{\lambda, n} \Vert^2 + \varepsilon_{k,l} \right)^2 } \left( n_i - \lambda_i \right)^2 \left(n_k - \lambda_k\right)^2 \left(n_l - \lambda_l\right)^2 \pi_\lambda( n) ~.} 
\end{equation}
We point out that $\mathbb{N}^M$ can be written as a finite union of disjoint sets as follows
\begin{equation}
\mathbb{N}^M = \bigcup_{p=0}^{2^M-1}  \mathcal{A}_\lambda^p ~,
\end{equation}
where, if we denote by $p_i$ the $i$-th bit in the binary representation of $p$, then
\begin{equation}
\fl\mathcal A_\lambda^p \coloneqq \left\{ n \in \mathbb N^M ~\colon~ (-1)^{p_i} \left(n_i - \left\lfloor \frac{\lambda_i}{2} \right\rfloor - p_i\right) \leq 0\, ~ \forall i\in\{1, \dots, M\} \right\} ~;
\end{equation}
moreover, we pose
\begin{equation}
\fl I^p_{\leq} \coloneqq \left\{ i\in\{1, \dots, M\}~\colon~ p_i = 0 \right\}~, \quad I^p_{\geq} \coloneqq \left\{ i\in\{1, \dots, M\}~\colon~ p_i = 1 \right\} ~.
\end{equation}
Now, it is easy to verify that 
\begin{equation}
\Vert \xi_{\lambda, n} \Vert \geq  \frac{\Vert \lambda \Vert_{I^p_{\geq}}}{2} ~,
\end{equation}
therefore
\begin{equation}\label{spezzamento A^p_lambda}
\fl\sum_{n \in \mathbb{N}^M} \frac{1}{\left(\Vert \xi_{\lambda, n} \Vert^2 + \varepsilon_{k,l} \right)^2 } \left( n_i - \lambda_i \right)^2 \left(n_k - \lambda_k\right)^2 \left(n_l - \lambda_l\right)^2 \pi_\lambda( n) \leq  \sum_{p=0}^{2^M - 1} \varphi_{p,k,l}\left( \lambda \right) ~,
\end{equation}
with
\begin{equation}
\fl\varphi_{p,k,l}\left(\lambda \right) \coloneqq \frac{16}{\left(\Vert \lambda \Vert_{I_\geq^p}^2 + 4 \varepsilon_{k,l} \right)^2} \sum_{n \in \mathcal{A}_\lambda^p} \left( n_i - \lambda_i \right)^2 \left(n_k - \lambda_k\right)^2 \left(n_l - \lambda_l\right)^2 \pi_\lambda(n) ~.
\end{equation}
Let us focus on
\begin{equation}
\sum_{n \in \mathcal{A}_\lambda^p} \left( n_i - \lambda_i \right)^2 \left(n_k - \lambda_k\right)^2 \left(n_l - \lambda_l\right)^2 \pi_\lambda(n) ~.
\end{equation}
Denoted $\overline{M}$ the cardinality of $I^p_\leq$, with $0\leq \overline{M} \leq M$, we distinguish four cases.
\begin{enumerate}[label=\arabic*)]
\item $i$, $k$, $l\in I_\geq^p$. Then,
\begin{equation}
\fl \eqalign{&\sum_{n \in \mathcal{A}_\lambda^p} \left( n_i - \lambda_i \right)^2 \left(n_k - \lambda_k\right)^2 \left(n_l - \lambda_l\right)^2 \pi_\lambda(n) \leq \cr
& \left(\prod_{q\in I_{\leq}^p} \sum_{n_q=0}^{\left\lfloor \frac{\lambda_q}{2} \right\rfloor} \frac{e^{-\lambda_q} \lambda_q^{n_q}}{n_q!}\right) \left( \prod_{r\in I_{\geq}^p}\sum_{n_r=0}^{+\infty} \left( n_i - \lambda_i \right)^2 \left(n_k - \lambda_k\right)^2 \left(n_l - \lambda_l\right)^2  \frac{e^{-\lambda_r} \lambda_r^{n_r}}{n_r!} \right) ~.}
\end{equation}
The fist term can be bounded by using Lemma  \ref{Lemma: O grande exp} and Lemma \ref{lemma equiv norme}, obtaining
\begin{equation}
\prod_{q\in I_{\leq}^p} \sum_{n_q=0}^{\left\lfloor \frac{\lambda_q}{2} \right\rfloor} \frac{e^{-\lambda_q} \lambda_q^{n_q}}{n_q!} = O \left( \Vert \lambda \Vert_{I_{\leq}^p}^{\overline{M}} e^{- c \Vert \lambda \Vert_{I^p_{\leq}}} \right)
\end{equation}
as $\Vert \lambda \Vert\to +\infty$ in $\mathcal{C}$. For the second term we consider instead the moments about the mean of a Poisson distribution (see \cite{johnson2005univariate} pag. 162) and Lemma \ref{lemma equiv norme} to have
\begin{equation}
\prod_{r\in I_{\geq}^p}\sum_{n_r=0}^{+\infty} \left( n_i - \lambda_i \right)^2 \left(n_k - \lambda_k\right)^2 \left(n_l - \lambda_l\right)^2  \frac{e^{-\lambda_r} \lambda_r^{n_r}}{n_r!} = O \left( \Vert \lambda \Vert^3_{I^p_\geq} \right)
\end{equation}
as $\Vert \lambda \Vert\to +\infty$ in $\mathcal{C}$.
\item Only two of $i$, $k$, $l$ belong to $I^p_\geq$ (in order to fix the ideas we assume $i$, $k\in I^p_\geq$ and $l\in I^p_\leq$). Then,
\begin{equation}
\fl\eqalign{&\sum_{n \in \mathcal{A}_\lambda^p} \left( n_i - \lambda_i \right)^2 \left(n_k - \lambda_k\right)^2 \left(n_l - \lambda_l\right)^2 \pi_\lambda(n) \leq \cr
& \left(\prod_{q\in I^p_\leq} \sum_{n_q=0}^{\left\lfloor \frac{\lambda_q}{2} \right\rfloor} \left(n_l - \lambda_l\right)^2 \frac{e^{-\lambda_q} \lambda_q^{n_q}}{n_q!}\right)\left( \prod_{r\in I_{\geq}^p}\sum_{n_r=0}^{+\infty} \left( n_i - \lambda_i \right)^2 \left(n_k - \lambda_k\right)^2 \frac{e^{-\lambda_r} \lambda_r^{n_r}}{n_r!} \right) ~. }
\end{equation}
We observe that $\left( n_l - \lambda_l \right)^2\leq \lambda^2_l\leq \Vert \lambda \Vert^2_{I^p_\leq}$, therefore, thanks to Lemma \ref{Lemma: O grande exp} and Lemma \ref{lemma equiv norme}, we obtain
\begin{equation}
\prod_{q\in I^p_\leq}\sum_{n_q=0}^{\left\lfloor \frac{\lambda_q}{2} \right\rfloor} \left(n_l - \lambda_l\right)^2 \frac{e^{-\lambda_q} \lambda_q^{n_q}}{n_q!} = O \left( \Vert \lambda \Vert_{I^p_\leq}^{\overline{M}+2} e^{-c \Vert \lambda \Vert_{I^p_\leq}} \right)
\end{equation}
as $\Vert \lambda \Vert\to +\infty$ in $\mathcal{C}$. Again, if we consider the moments about the mean of a Poisson distribution and Lemma \ref{lemma equiv norme}, we have
\begin{equation}
\prod_{r\in I^p_\geq}\sum_{n_r=0}^{+\infty} \left( n_i - \lambda_i \right)^2 \left(n_k - \lambda_k\right)^2 \frac{e^{-\lambda_r} \lambda_r^{n_r}}{n_r!} = O \left( \Vert \lambda \Vert^2_{I^p_\geq} \right)
\end{equation}
as $\Vert \lambda \Vert\to +\infty$ in $\mathcal{C}$.

\item Only one of $i$, $k$, $l$ belongs to $I^p_\geq$ (in order to fix the ideas we assume $i\in I^p_\geq$ and $k$, $l\in I^p_\leq$). Then we have
\begin{equation}
\fl\eqalign{&\sum_{n \in \mathcal{A}_\lambda^p} \left( n_i - \lambda_i \right)^2 \left(n_k - \lambda_k\right)^2 \left(n_l - \lambda_l\right)^2 \pi_\lambda(n) \leq \\
&\left(\prod_{q\in I^p_\leq}\sum_{n_q=0}^{\left\lfloor \frac{\lambda_q}{2} \right\rfloor} \left(n_k - \lambda_k\right)^2 \left(n_l - \lambda_l\right)^2 \frac{e^{-\lambda_q} \lambda_q^{n_q}}{n_q!}\right)\left( \prod_{r\in I^p_\geq}\sum_{n_r=0}^{+\infty} \left( n_i - \lambda_i \right)^2 \frac{e^{-\lambda_r} \lambda_r^{n_r}}{n_r!} \right) = \\
& \left(\prod_{q\in I^p_\leq}\sum_{n_q=0}^{\left\lfloor \frac{\lambda_q}{2} \right\rfloor} \left(n_k - \lambda_k\right)^2 \left(n_l - \lambda_l\right)^2 \frac{e^{-\lambda_q} \lambda_q^{n_q}}{n_q!}\right)\left( \sum_{n_i=0}^{+\infty} \left( n_i - \lambda_i \right)^2 \frac{e^{-\lambda_i} \lambda_i^{n_i}}{n_i!} \right) = \\
&\left(\prod_{q\in I^p_\leq}\sum_{n_q=0}^{\left\lfloor \frac{\lambda_q}{2} \right\rfloor} \left(n_k - \lambda_k\right)^2 \left(n_l - \lambda_l\right)^2 \frac{e^{-\lambda_q} \lambda_q^{n_q}}{n_q!}\right) \lambda_i  ~.
}
\end{equation}
We observe that $(n_k - \lambda_k)^2(n_l - \lambda_l)^2\leq \lambda_k^2 \lambda_l^2\leq \Vert \lambda \Vert_{I^p_\leq}^4$. Thanks to Lemma \ref{Lemma: O grande exp} and Lemma \ref{lemma equiv norme} we obtain
\begin{equation}
\fl\prod_{q\in I^p_\leq}\sum_{n_q=0}^{\left\lfloor \frac{\lambda_q}{2} \right\rfloor} \left(n_k - \lambda_k\right)^2 \left(n_l - \lambda_l\right)^2 \frac{e^{-\lambda_q} \lambda_q^{n_q}}{n_q!} = O \left( \Vert \lambda \Vert_{I^p_\leq}^{\overline{M}+4} e^{-c\Vert \lambda \Vert_{I^p_\leq}}  \right) ~,
\end{equation}
while trivially $\lambda_i = O(\Vert \lambda \Vert_{I^p_\geq})$ as $\Vert \lambda \Vert\to +\infty$ in $\mathcal{C}$.
\item $i$, $k$, $l\in I^p_\leq$. Then we have 
\begin{equation}
\eqalign{&\sum_{n \in \mathcal{A}_\lambda^p} \left( n_i - \lambda_i \right)^2 \left(n_k - \lambda_k\right)^2 \left(n_l - \lambda_l\right)^2 \pi_\lambda(n) \leq \cr
&\left(\prod_{q\in I^p_\leq}\sum_{n_q=0}^{\left\lfloor \frac{\lambda_q}{2} \right\rfloor} \left(n_i - \lambda_i\right)^2 \left(n_k - \lambda_k\right)^2 \left(n_l - \lambda_l\right)^2 \frac{e^{-\lambda_q} \lambda_q^{n_q}}{n_q!}\right)~.}
\end{equation}
Now, as $\left(n_i - \lambda_i\right)^2 \left(n_k - \lambda_k\right)^2 \left(n_l - \lambda_l\right)^2 \leq \lambda_i^2 \lambda_k^2 \lambda_l^2\leq \Vert \lambda \Vert_{I^p_\leq}^6 $, if we apply Lemma \ref{Lemma: O grande exp} and Lemma \ref{lemma equiv norme} we obtain
\begin{equation}
\fl\sum_{q\in I^p_\leq, n_q=0}^{\left\lfloor \frac{\lambda_q}{2} \right\rfloor} \left(n_i - \lambda_i\right)^2 \left(n_k - \lambda_k\right)^2 \left(n_l - \lambda_l\right)^2 \frac{e^{-\lambda_q} \lambda_q^{n_q}}{n_q!} = O \left( \Vert \lambda \Vert_{I^p_\leq}^{\overline{M}+6} e^{-c_4\Vert \lambda \Vert_{I^p_\leq}}  \right) 
\end{equation}
as $\Vert \lambda \Vert\to +\infty$ in $\mathcal{C}$.
\end{enumerate}
If we take into account all these facts we have
\begin{equation}
\varphi_{p,k,l}\left(\lambda\right) = O\left(\left(\Vert \lambda \Vert_{I^p_\leq}^{\overline{M}+6} e^{-c\Vert \lambda \Vert_{I^p_\leq}}\right) \frac{\Vert \lambda \Vert_{I^p_\geq}^3}{\left(\Vert \lambda \Vert_{I_\geq^p}^2 + 4\varepsilon_{k,l} \right)^2} \right)
\end{equation}
as $\Vert \lambda \Vert\to +\infty$ in $\mathcal{C}$. If $p < 2^M - 1$, then there exist at least one bit equal to $0$ in the binary decomposition of $p$ and therefore $I^p_\leq$ is not empty. If we apply Lemma \ref{lemma equiv norme}, we have that there exist $c'>0$ and a rational function $g_p$ such that
\begin{equation}
\varphi_{p,k,l}\left(\lambda\right) = O\left(g_p\left(\Vert \lambda \Vert\right) e^{-c'\Vert \lambda \Vert} \right)
\end{equation}
as $\Vert \lambda \Vert\to +\infty$ in $\mathcal{C}$, hence $\varphi_{p,k,l}$ tends to zero exponentially fast. If $p = 2^M - 1$, then $I^p_\leq = \emptyset$ and thanks to Lemma \ref{lemma equiv norme}
\begin{equation}
\varphi_{p,k,l}\left(\lambda\right) = O \left(\Vert \lambda \Vert^{-1} \right) 
\end{equation}
as $\Vert \lambda \Vert\to +\infty$ in $\mathcal{C}$. This proves \eref{Lemma: tesi 1} and therefore \eref{eq O grande 1}.
\end{proof}

\begin{proof}[Proof of Theorem \ref{KL risk theorem}]
From some simple computations we obtain
\begin{equation}
\fl\eqalign{&\mathbb{E} ( D_{\mathrm{KL}} (\lambda, \hat{\lambda}(Y) ) ) = \\ &\mathbb{E} (D_{\mathrm{KL}}(Y, \hat{\lambda}(Y))) + \mathbb{E}\left(\sum_{i=1}^M (Y_i - \lambda_i) \log \hat{\lambda}(Y) \right) - \mathbb{E} \left( \sum_{i=1}^M  Y_i\log \frac{Y_i}{\lambda_i} \right) ~.}
\end{equation}
We point out that, thanks to Lemma \ref{lemma equiv norme}, if $\Vert \lambda \Vert \to +\infty$ in the cone $\mathcal{C}$, then $\lambda_i \to +\infty$ for all $i\in\{1, \dots, M\}$. Therefore, we can apply the result proved in \cite{zanella2013corrigendum} and Lemma \ref{lemma equiv norme} to obtain

\begin{equation}
\mathbb{E}\left(\sum_{i=1}^M Y_i\log \frac{Y_i}{\lambda_i}\right) = \frac{M}{2} + O \left(\Vert \lambda \Vert^{-1} \right) \quad\mbox{}
\end{equation}
as $\Vert\lambda\Vert\to +\infty$ in $\mathcal{C}$. Finally, as a consequence of Lemma \ref{Stein's lemma (Poisson)}, we have
\begin{equation}
\mathbb{E}\left(\sum_{i=1}^M (Y_i - \lambda_i) \log \hat{\lambda}(Y) \right) = \mathbb{E}\left( (Y \nabla) \cdot \log\hat{\lambda}(Y) \right) + O\left(\Vert\lambda\Vert^{-1/2} \right) 
\end{equation}
as $\Vert \lambda \Vert\to +\infty$ in $\mathcal{C}$.
\end{proof}

\begin{proof}[Proof of Proposition \ref{Prop: propr Rk}]
We prove the thesis by induction. The case $k=1$ is trivial with $d=0$, as for all $j\in\{1, \dots, N\}$ we have
\begin{equation}
(R_1(y))_j = \sum_{i=1}^M \xi_{ji} y_i 
\end{equation}
with
\begin{equation}
\xi_{ji} \coloneqq \frac{(x_0)_j H_{ij}}{(H^T 1 )_j((Hx_0)_i + b)} > 0 ~.
\end{equation}
Let us now assume 
\begin{equation}\label{hp induttiva}
(R_k(y))_l =  \frac{p^{(d+1)}_{k,l}(y) + \dots + p^{(0)}_{k,l} }{q^{(d)}_{k,l}(y) + \dots + q^{(0)}_{k,l}}
\end{equation}
satisfying \ref{Prop rational function i} and \ref{Prop rational function ii} for all $l\in\{1, \dots, N \}$. Then, for all $i\in\{1, \dots, M \}$ we can write
\begin{equation}
(HR_k(y))_i  + b = \frac{a_{k,i}^{(Nd+1)}(y) + \dots + a_{k,i}^{(0)}}{c_{k}^{(Nd)}(y) + \dots + c_{k}^{(0)}} ~,
\end{equation}
where the summands in the numerator and in the denominator of the right-hand side are homogeneous polynomials with nonnegative coefficients. In particular, as
\begin{equation}
a_{k,i}^{(Nd+1)}(y) \coloneqq \sum_{l=1}^N H_{il} p^{(d+1)}_{k,l}(y) \left(\prod_{s=1 \atop s \neq l}^N q^{(d)}_{k,s}(y) \right) ~,
\end{equation}
\begin{equation}
c_{k}^{(Nd)}(y) \coloneqq \prod_{t=1}^N q^{(d)}_{k,t}(y) ~,
\end{equation}
and
\begin{equation}
a_{k,i}^{(0)} \coloneqq \sum_{l=1}^N H_{il} p^{(0)}_{k,l} \left(\prod_{s=1 \atop s \neq l}^N q^{(0)}_{k,s} \right) + b \left( \prod_{t = 1}^N q^{(0)}_{k,t} \right) ~,
\end{equation}
we point out that both $a_{k,i}^{(Nd+1)}$ and $c_{k}^{(Nd)}$ are complete and that $a_{k,i}^{(0)}>0$. Hence, with some simple calculations, one can prove that for all $j\in\{1, \dots, N\}$ it holds
\begin{equation}\label{update expression}
\left(\frac{1}{H^T 1}H^T\left( \frac{y}{HR_k(y)  +  b} \right)\right)_j = \frac{\tilde{p}^{(M(Nd + 1))}_{k,j}(y) + \dots + \tilde{p}_{k,j}^{(0)}}{\tilde{q}^{(M(Nd + 1))}_{k}(y) + \dots + \tilde{q}_{k}^{(0)}} ~,
\end{equation}
where the numerator and the denominator of the right-hand side consist of sums of homogeneous polynomials with nonnegative coefficients; moreover, we can observe that
\begin{equation}
\tilde{p}^{(M(Nd + 1))}_{k,j}(y) \coloneqq \frac{c_k^{(Nd)}(y)}{(H^T 1 )_j} \left(\sum_{i=1}^M H_{ij} y_i \left(\prod_{r=1 \atop r \neq i}^M a_{k,r}^{(Nd+1)}(y) \right)\right)
\end{equation}
and
\begin{equation}
\tilde{q}^{(M(Nd + 1))}_{k}(y) \coloneqq \prod_{i=1}^M a_{k,i}^{(Nd+1)}(y) 
\end{equation}
are complete and that
\begin{equation}
\tilde{q}_{k}^{(0)} \coloneqq \prod_{i=1}^M a_{k,i}^{(0)} >0 ~.
\end{equation}
If we use \eref{hp induttiva} and \eref{update expression} we obtain
\begin{equation}
(R_{k+1}(y))_j =  \frac{p^{(d+1)}_{k,j}(y) + \dots + p^{(0)}_{k,j} }{q^{(d)}_{k,j}(y) + \dots + q^{(0)}_{k,j}} \cdot \frac{\tilde{p}^{(M(Nd + 1))}_{k,j}(y) + \dots + \tilde{p}_{k,j}^{(0)}}{\tilde{q}^{(M(Nd + 1))}_{k}(y) + \dots + \tilde{q}_{k}^{(0)}} ~.
\end{equation}
Therefore we can write
\begin{equation}
(R_{k+1}(y))_j =  \frac{p^{\left(\overline{d} +1 \right)}_{k+1,j}(y) + \dots + p^{(0)}_{k+1,j} }{q^{\left(\overline{d} \right)}_{k+1,j}(y) + \dots + q^{(0)}_{k+1,j}} ~,
\end{equation}
where $\overline{d} = (MN + 1)d + M$ and 
\begin{enumerate}[label=(\roman*)]
\item $p^{\left(n\right)}_{k+1,j}$, $q^{\left(m\right)}_{k+1,j}$ are homogeneous polynomials with nonnegative coefficients for all $n\in\{1, \dots, \overline{d}+1\}$ and for all $m\in\{1, \dots, \overline{d}\}$;
\item $p^{\left(\overline{d}+1\right)}_{k+1,j}$, $q^{\left(\overline{d}\right)}_{k+1,j}$  are complete  and $q^{(0)}_{k+1,j}>0$ as
\begin{equation}
p^{\left(\overline{d}+1\right)}_{k+1,j} (y) \coloneqq p^{(d+1)}_{k,j}(y) \cdot \tilde{p}^{(M(Nd + 1))}_{k,j}(y) ~,
\end{equation}
\begin{equation}
q^{\left(\overline{d}\right)}_{k+1,j}(y) \coloneqq q^{(d)}_{k,j}(y) \cdot \tilde{q}^{(M(Nd + 1))}_{k}(y) ~,
\end{equation}
and
\begin{equation}
q^{(0)}_{k+1,j} \coloneqq q^{(0)}_{k,j} \cdot \tilde{q}_{k}^{(0)} ~.
\end{equation}
\end{enumerate}
\end{proof}

\begin{proof}[Proof of Proposition \ref{Prop: propr log lambda}]
It follows from Proposition \ref{Prop: propr Rk} that $(\hat{\lambda}_k )_i\in C^{\infty}\left(\mathbb{R}^M_+ \right)$. As $(\hat{\lambda}_k)_i$ is strictly positive on $\mathbb{R}^M_+$, then  $(\log \hat{\lambda}_k )_i$ is well defined and belongs to $C^{\infty}(\mathbb{R}^M_+)$, hence \eref{Lemma propr log: i}. Let us prove \eref{Lemma propr log: ii}. As a consequence of Proposition \ref{Prop: propr Rk}, one can easily prove that, for all $j\in\{1, \dots, M\}$
\begin{equation}
\partial_j (\log \hat{\lambda}_k )_i(y) = \frac{p^{(d)}(y) + \dots + p^{(0)}}{q^{(d+1)}(y) + \dots + q^{(0)}}
\end{equation}
with $d\in\mathbb{N}$ and $p^{(n)}$, $q^{(m)}$ homogeneous polynomials of degree $n$ and $m$ respectively for all $n\in\{1, \dots, d\}$ and for all $m\in\{1, \dots, d+1\}$. Moreover, the polynomials in the denominator have nonnegative coefficients, $q^{(d+1)}$ is complete and $q^{(0)}>0$. Since $y_i \leq \Vert y \Vert$ for all $y\in\mathbb{R}^M_+$, there exist $\tilde{p}_0, \dots, \tilde{p}_d >0$ such that 
\begin{equation}\label{eq: magg p}
\vert p^{(d)}(y) + \dots + p^{(0)}\vert \leq \tilde{p}_d \Vert y \Vert^d + \dots +  \tilde{p}_0 ~.
\end{equation}
Moreover, for Weierstrass Theorem there exists $\tilde{q}_{d+1}\geq 0$ such that
\begin{equation}
\left\vert q^{(d+1)}\left(\frac{y}{\Vert y \Vert}\right) \right\vert \geq \tilde{q}_{d+1}
\end{equation}
for all $y\in\mathbb{R}_+^M\setminus\{0\}$. The completeness of $q^{(d+1)}$ allows us to conclude that $\tilde{q}_{d+1}>0$. If $y\in\mathbb{R}^M_+\setminus\{0\}$, then
\begin{equation}
\left\vert q^{(d+1)}\left(y\right) \right\vert = \Vert y \Vert^{d+1} \left\vert q^{(d+1)}\left(\frac{y}{\Vert y \Vert}\right) \right\vert \geq \tilde{q}_{d+1} \Vert y \Vert^{d+1}
\end{equation}
(actually, the inequality $\left\vert q^{(d+1)}\left(y\right) \right\vert \geq \tilde{q}_{d+1} \Vert y \Vert^{d+1}$ holds true for all $y\in\mathbb{R}^M_+$). Therefore,
\begin{equation}\label{eq: min q}
\vert q^{(d+1)}\left(y\right) + \dots + q^{(0)} \vert \geq \tilde{q}_{d+1} \Vert y \Vert^{d+1} + q^{(0)} ~.
\end{equation}
By taking into account \eref{eq: magg p} and \eref{eq: min q} we obtain
\begin{equation}
\eqalign{& \vert \partial_j (\log \hat{\lambda}_k )_i (y)\vert \leq \frac{\tilde{p}_d \Vert y \Vert^d + \dots +  \tilde{p}_0}{\tilde{q}_{d+1} \Vert y \Vert^{d+1} + q^{(0)}} = \cr & \tilde{p}_d\frac{\Vert y \Vert^d}{\tilde{q}_{d+1} \Vert y \Vert^{d+1}+  q^{(0)}} + \dots + \frac{\tilde{p}^{(0)}}{\tilde{q}_{d+1} \Vert y \Vert^{d+1}+  q^{(0)}} ~.}
\end{equation}
Hence, \eref{Lemma propr log: ii} is straightforward from the fact that, for all $s\in\{0, \dots, d\}$ and for all $c>0$ there exist $a_s$, $b_s>0$ such that 
\begin{equation}
\frac{t^s}{t^{d+1} + c} \leq \frac{a_s}{t + b_s}
\end{equation}
for all $t\geq 0$. The proof of \eref{Lemma propr log: iii} is analogous to the one of \eref{Lemma propr log: ii}.
\end{proof}

\newcommand{\newblock}{}
\bibliographystyle{apa}
\bibliography{mybibliography}

\end{document}